\documentclass{amsart}[10pt]
\usepackage[matrix,arrow,curve]{xy}
\usepackage{amssymb, amsmath, euscript}
\setlength{\textwidth}{165mm}\setlength{\oddsidemargin}{0mm}\setlength{\evensidemargin}{0mm}\setlength{\topmargin}{0mm}\setlength{\textheight}{235mm}
\newtheorem{theorem}{Theorem}
\newtheorem*{theorem*}{theorem}

\newtheorem{lemma}{Lemma}
\newtheorem{definition}{Definition}
\newtheorem{corollary}{Corollary}

\renewcommand{\to}[1][]{\xrightarrow{#1}}
\begin{document}
\author{Alexey Petukhov}
\title[Support varieties of $(\mathfrak g, \mathfrak k)$-modules of finite type]{Support varieties of $(\mathfrak g, \mathfrak k)$-modules of finite type}
\maketitle
\section{Brief statement of results}
Let $\mathfrak g$ be a semisimple Lie algebra over an algebraically closed field $\mathbb F$ of characteristic 0 and $\mathfrak k\subset\mathfrak g$ be a reductive in $\mathfrak g$ subalgebra.
\begin{definition}\upshape A {\it $(\mathfrak g, \mathfrak k)$-module} is a $\mathfrak g$-module which after restriction to $\mathfrak k$ becomes a direct sum of finite-dimensional $\mathfrak k$-modules.\end{definition}
\begin{definition}\upshape A $(\mathfrak g, \mathfrak k)$-module is of {\it finite type} if it is a $\mathfrak k$-module of {\it finite type}, i.e. has finite-dimensional $\mathfrak k$-isotypic components.\end{definition}
Let Z$(\mathfrak g)$ be the center of the universal enveloping algebra U$(\mathfrak g)$. Let $\chi:$Z$(\mathfrak g)\to \mathbb F$ be an algebra homomorphism.
\begin{definition}\upshape We say that a $\mathfrak g$-module $M$ {\it affords a central character} if for some homomorphism $\chi:$Z$(\mathfrak g)\to\mathbb F$ we have $zm=\chi(z)m$ for all $z\in\mathrm Z(\mathfrak g)$ and $m\in M$.\end{definition}
Any simple $\mathfrak g$-module $M$ affords a central character~\cite{Dix}. Let $X$ be the variety of all Borel subalgebras of $\mathfrak g$. The category of $\mathfrak g$-modules which affords a central character $\chi$  is equivalent to the category of sheaves of $\EuScript O_X$-quasicoherent modules over the sheaf of twisted differential operators $\EuScript D^\lambda(X)$ for a suitable twist $\lambda\in$H$^1(X, \Omega^{1,~cl}_X)$~\cite{BeBe}, where $\Omega^{1,~cl}_X$ is the sheaf of closed holomorphic 1-forms on $X$. In this category there is a distinguished full subcategory of holonomic sheaves of modules. Informally, holonomic sheaves of modules are $\EuScript D^\lambda(X)$-modules of minimal growth. The simple holonomic modules $M$ are in one-to-one correspondence with the pairs $(L, S)$, where $L$ is an irreducible closed subvariety of $X$ and $S$ is a sheaf of $\EuScript D^\lambda (L')$-modules which is $\EuScript O(L')$-coherent after restriction to a suitable open subset $L'\subset L$. Moreover, a coherent holonomic module $S$ is locally free on $L'$ and one could think about it as a vector bundle $S_B$ over $L'$ with a flat connection. Note that flat local sections of this bundle are not necessarily algebraic.
\begin{theorem}\upshape Let $M$ be a finitely generated $(\mathfrak g, \mathfrak k)$-module of finite type which affords a central character. Then $\mathrm{Ind}M$ is a holonomic $\EuScript D^\lambda (X)$-module.\end{theorem}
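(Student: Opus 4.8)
\medskip
The plan is to reduce holonomicity to a dimension bound on the characteristic variety. Recall $\mathrm{Ind}$ is Beilinson--Bernstein localization $M\mapsto\EuScript D^\lambda(X)\otimes_{\mathrm U(\mathfrak g)}M$; since $M$ is finitely generated over $\mathrm U(\mathfrak g)$, $\mathrm{Ind}\,M$ is a coherent $\EuScript D^\lambda(X)$-module, hence has a characteristic variety $\mathrm{Ch}(\mathrm{Ind}\,M)\subset T^*X$, conic and --- by Gabber's involutivity theorem --- coisotropic, so every irreducible component of it has dimension $\ge\dim X$. Thus $\mathrm{Ind}\,M$ is holonomic as soon as $\dim\mathrm{Ch}(\mathrm{Ind}\,M)\le\dim X$, and this is the inequality I would establish.

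The first steps produce two inclusions for $\mathrm{Ch}(\mathrm{Ind}\,M)$ by comparing principal symbols. I would choose a \emph{$\mathfrak k$-stable} good filtration on $M$: using local $\mathfrak k$-finiteness, $M$ is generated over $\mathrm U(\mathfrak g)$ by a finite-dimensional $\mathfrak k$-submodule $M_0$, and $F_nM:=\mathrm U(\mathfrak g)_{\le n}M_0$ is good and stable under $\mathfrak k$. Every $x\in\mathfrak k$ then acts on $\mathrm{gr}\,M$ in filtration degree $0$, hence with vanishing symbol, so $\mathfrak k\subset\mathrm S(\mathfrak g)$ kills $\mathrm{gr}\,M$; the central character makes the augmentation ideal of $\mathrm S(\mathfrak g)^{\mathfrak g}$ kill $\mathrm{gr}\,M$ as well. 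Writing $V(M)=\mathrm{Supp}_{\mathrm S(\mathfrak g)}\mathrm{gr}\,M$ and $\mu\colon T^*X\to\EuScript N$ for the moment map onto the nilpotent cone, this gives $V(M)\subset\mathfrak k^\perp\cap\EuScript N$ (with $\mathfrak k^\perp=(\mathfrak g/\mathfrak k)^*$), while the standard compatibility of good filtrations under localization (Borho--Brylinski) gives $\mathrm{Ch}(\mathrm{Ind}\,M)\subset\mu^{-1}(V(M))$. Here $\mu^{-1}(\mathfrak k^\perp)$ is precisely the zero fibre of the moment map $T^*X\to\mathfrak k^*$ for the action on $X$ of the simply connected group $K$ integrating the locally finite $\mathfrak k$-action, so this already records the $K$-equivariance of $\mathrm{Ind}\,M$.

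Next the finite type hypothesis enters, through the geometry of $V(M)$. Since the filtration is $\mathfrak k$-stable, $\mathrm{gr}\,M$ is of finite type over $\mathfrak k$, and as $M_0$ generates it, $\mathrm S(\mathfrak g)/\mathrm{Ann}(\mathrm{gr}\,M)$ embeds $\mathfrak k$-equivariantly into $M_0^*\otimes\mathrm{gr}\,M$; hence its quotient $\mathbb F[V(M)]$ is again of finite type over $\mathfrak k$. The key assertion --- and the place where the hypothesis is genuinely used --- is that this forces $V(M)$ to be a \emph{finite} union of $K$-orbit closures, equivalently: every nilpotent $G$-orbit $\EuScript O$ meets $V(M)$ in finitely many $K$-orbits. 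I would prove this by Noetherian induction on $V(M)$, the crucial point being that an irreducible $K$-stable conic subvariety of $\mathfrak g^*$ whose coordinate ring is of finite type over $\mathfrak k$ carries a dense $K$-orbit; one then removes this orbit and induct on the boundary, which is again of finite type as a quotient. Granting this and recalling $V(M)\subset\mathfrak k^\perp$: each $K$-orbit $Kx\subset V(M)\cap\EuScript O$ lies in $\mathfrak k^\perp\cap\EuScript O$, i.e. in the zero level set of the $K$-moment map on the coadjoint symplectic orbit $\EuScript O=Gx$, hence is isotropic in $\EuScript O$, so $\dim Kx\le\tfrac12\dim\EuScript O$; together with the finiteness of the orbit set this yields $\dim(V(M)\cap\EuScript O)\le\tfrac12\dim\EuScript O$ for every nilpotent orbit $\EuScript O$.

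The theorem then follows by a dimension count. Stratifying $\mu^{-1}(V(M))$ by nilpotent orbits and using $\dim\mu^{-1}(\xi)=\dim X-\tfrac12\dim(G\xi)$ for the Springer fibre over $\xi\in\EuScript N$,
\[
\dim\mathrm{Ch}(\mathrm{Ind}\,M)\ \le\ \dim\mu^{-1}(V(M))\ =\ \max_{\EuScript O}\Big(\dim\big(V(M)\cap\EuScript O\big)+\dim X-\tfrac12\dim\EuScript O\Big)\ \le\ \dim X,
\]
the last inequality by the previous paragraph (the stratum $\EuScript O=\{0\}$, the zero section, contributing exactly $\dim X$), so $\mathrm{Ind}\,M$ is holonomic. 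The hard part will be the structural statement of the third paragraph: ruling out a nilpotent orbit meeting $V(M)$ in a positive-dimensional family of $K$-orbits is exactly where the finiteness of the $\mathfrak k$-multiplicities has to be exploited, and it is the technical heart of the argument; by contrast the two symbol inclusions and the final dimension count are essentially mechanical.
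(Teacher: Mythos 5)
Your overall architecture is sound and runs parallel to the paper: Gabber's involutivity reduces holonomicity to the bound $\dim\EuScript V(\mathrm{Ind}M)\le\dim X$; a $\mathfrak k$-stable good filtration gives $\mathrm V(M)\subset\mathfrak k^\perp\cap\mathrm N_G\mathfrak g^*$ and $\EuScript V(\mathrm{Ind}M)\subset\phi_X^{-1}(\mathrm V(M))$; and the Springer-fibre dimension count at the end is correct. The gap is exactly where you place it, in the third paragraph, and it is fatal as written. The lemma you propose --- that an irreducible $K$-stable conic subvariety of $\mathfrak g^*$ whose coordinate ring is a $\mathfrak k$-module of finite type carries a dense $K$-orbit --- is false. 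Take $K=\mathbb F^*$ acting on $Y=\mathbb F^2$ with both weights equal to $1$: the isotypic components of $\mathbb F[Y]$ are the spaces of homogeneous polynomials, hence finite-dimensional, and $Y$ is irreducible, conic and $K$-stable, yet it carries a one-parameter family of $K$-orbits and no dense one. Such situations occur inside $\mathfrak g^*$ in the intended setting: for $\mathfrak k=\mathfrak h$ a Cartan subalgebra (finite type $=$ finite-dimensional weight spaces), the nilradical $\mathfrak n\subset\mathfrak h^\perp\cap\mathrm N_H\mathfrak g^*$ has finite-dimensional $\mathfrak h$-isotypic components in its coordinate ring but infinitely many $H$-orbits as soon as $\dim\mathfrak n>\mathrm{rk}\,\mathfrak g$. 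What the Vinberg--Popov criterion actually yields is only finiteness of \emph{closed} $K$-orbits in $\mathrm V(M)$, which, since $\mathrm V(M)$ is conic, says precisely that $\mathrm V(M)$ lies in the $\mathfrak k$-null-cone $\mathrm N_K\mathfrak g^*$. Finiteness of all $K$-orbits on $\EuScript O\cap\mathfrak k^\perp$ is a much stronger statement (true in the symmetric case by Kostant--Rallis) and is neither proved by your Noetherian induction nor available for general reductive $\mathfrak k$.

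The repair does not require orbit finiteness, and this is the content of the paper's Theorem 2. From $\mathrm V(M)\subset\mathfrak k^\perp\cap\mathrm N_K\mathfrak g^*$ and the Hilbert--Mumford criterion, the set $\mathfrak k^\perp\cap\mathrm N_K\mathfrak g^*$ is covered by finitely many subsets $K(\mathfrak n_h\cap\mathfrak k^\perp)$, where $h$ runs over a finite set of rational semisimple elements of $\mathfrak k$; each such subset is the image under the moment map $\mathrm T^*S_G\to\mathfrak g^*$ (with $S_G=G/G_h^{\ge0}$) of the conormal variety $\mathrm N^*_{S_K/S_G}$, and images of conormal (hence isotropic) varieties under the moment map are isotropic in the coadjoint orbit through a generic point. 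This gives $\dim(\mathrm V(M)\cap\EuScript O)\le\frac12\dim\EuScript O$ for every nilpotent orbit $\EuScript O$ directly, with no dense-orbit or finiteness input; your concluding dimension count (equivalently, the paper's ``isotropic and coisotropic, hence Lagrangian'' argument) then finishes the proof.
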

We also prove the following theorem(the necessary definitions see in the following section).
\begin{theorem}\upshape\label{G}Let $\EuScript O\subset\mathfrak g^*$ be a nilpotent coadjoint $G$-orbit, $\mathfrak k^\bot$ be the annihilator of $\mathfrak k$ in $\mathfrak g^*$, and $\mathrm N_{\mathfrak k}\mathfrak g^*$ be the $\mathfrak k$-null-cone in $\mathfrak g^*$. Then the irreducible components of $\EuScript O\cap \mathfrak k^\bot\cap\mathrm N_{\mathfrak k}\mathfrak g^*$ are isotropic subvarieties of $\EuScript O$.\end{theorem}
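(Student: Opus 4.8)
The plan is to prove that every irreducible component $Z$ of $W:=\EuScript O\cap\mathfrak k^\bot\cap\mathrm N_{\mathfrak k}\mathfrak g^*$ is isotropic in $\EuScript O$ for the Kirillov--Kostant--Souriau form $\omega$. Since $\EuScript O$, $\mathfrak k^\bot$ and $\mathrm N_{\mathfrak k}\mathfrak g^*$ are all stable under the coadjoint action of $K$, so is $W$, and as $K^\circ$ is connected each component $Z$ is $K$-stable; thus $\mathrm{ad}^*_{\mathfrak k}x\subseteq T_xZ$ at every smooth point $x\in Z$. The action of $K$ on $\EuScript O$ is Hamiltonian with moment map $\mu\colon\EuScript O\to\mathfrak k^*$, $\mu(y)=y|_{\mathfrak k}$, so $\EuScript O\cap\mathfrak k^\bot=\mu^{-1}(0)$. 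Two standard consequences will be used throughout: for $y\in\mu^{-1}(0)$ the subspace $\mathrm{ad}^*_{\mathfrak k}y$ is $\omega$-isotropic, since $\omega_y(\mathrm{ad}^*_\zeta y,\mathrm{ad}^*_{\zeta'}y)=\langle y,[\zeta,\zeta']\rangle\in\langle y,\mathfrak k\rangle=0$; and for any subvariety $Y\subseteq\mu^{-1}(0)$ and $y\in Y_{\mathrm{sm}}$ one has $T_yY\subseteq\ker(d\mu)_y=(\mathrm{ad}^*_{\mathfrak k}y)^{\bot_\omega}$.

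First I would pin down the position of $Z$ inside the null-cone. By the Hilbert--Mumford criterion $\mathrm N_{\mathfrak k}\mathfrak g^*=\bigcup_\gamma\mathfrak g^*_{\gamma,>0}$, the union over one-parameter subgroups $\gamma$ of $K$ of the sums $\mathfrak g^*_{\gamma,>0}$ of the positive $\mathrm{ad}^*_{\dot\gamma}$-weight spaces. The subspace $\mathfrak g^*_{\gamma,>0}$ depends only on the open chamber of $\dot\gamma$ in the finite hyperplane arrangement cut out on a maximal torus of $\mathfrak k$ by the weights occurring in $\mathfrak g^*$, so up to $K$-conjugacy there are only finitely many of them; moreover $\mathfrak g^*_{\gamma,>0}$ is stable under the parabolic $P_\gamma\subseteq K$ with Lie algebra $\mathfrak k_{\gamma,\ge0}$, so $K\cdot\mathfrak g^*_{\gamma,>0}$ is closed because $K/P_\gamma$ is complete. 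Hence $\mathrm N_{\mathfrak k}\mathfrak g^*$ is a finite union of closed sets $K\cdot\mathfrak g^*_{\gamma_i,>0}$ (this is the finiteness behind the Hesselink stratification), the irreducible $Z$ lies in one of them, and being $K$-stable it satisfies $Z=K\cdot(Z\cap\mathfrak g^*_{\gamma,>0})$ for a suitable $\gamma$.

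The essential geometric input enters here. As $\dot\gamma$ is semisimple in $\mathfrak g$ it determines a parabolic $\mathfrak p=\mathfrak g_{\ge0}\subseteq\mathfrak g$, and one checks directly that $\mathfrak g^*_{\gamma,>0}=\mathfrak p^\bot$. Choosing a Borel subalgebra $\mathfrak b\subseteq\mathfrak p$ gives $\mathfrak p^\bot\subseteq\mathfrak b^\bot$, and under the $G$-equivariant isomorphism $\mathfrak g^*\cong\mathfrak g$ induced by the Killing form — which carries $\EuScript O$ to an adjoint nilpotent orbit and $\omega$ to the corresponding form — the subspace $\mathfrak b^\bot$ goes to the nilradical $\mathfrak n$ of $\mathfrak b$. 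By the classical theorem on orbital varieties (Spaltenstein; see also Joseph) the irreducible components of $\EuScript O\cap\mathfrak n$ are Lagrangian, hence isotropic, subvarieties of $\EuScript O$; since a closed subvariety of an isotropic subvariety is again isotropic (immediate on passing to a resolution), every irreducible component of $Z\cap\mathfrak g^*_{\gamma,>0}\subseteq\EuScript O\cap\mathfrak n$ is isotropic in $\EuScript O$. From $Z=K\cdot(Z\cap\mathfrak g^*_{\gamma,>0})$ and the irreducibility of $Z$ one of these components $Z_0$ satisfies $\overline{K\cdot Z_0}=Z$.

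It remains to glue the $K$-directions onto $Z_0$. Consider the $K$-equivariant dominant morphism $\psi\colon K\times Z_0\to Z$, $(k,z)\mapsto k\cdot z$; by generic smoothness (characteristic $0$) together with the $K$-equivariance, $d\psi$ is surjective onto $T_{\psi(k,z)}Z$ on a subset of the form $K\times U$ with $U\subseteq Z_0$ dense open, so for $x\in U$ that is also a smooth point of $Z$ we get $T_xZ=\mathrm{ad}^*_{\mathfrak k}x+T_xZ_0$. Here $\mathrm{ad}^*_{\mathfrak k}x$ and $T_xZ_0$ are both $\omega$-isotropic (the latter by the previous paragraph), and $T_xZ_0\subseteq(\mathrm{ad}^*_{\mathfrak k}x)^{\bot_\omega}$ because $Z_0\subseteq\mu^{-1}(0)$, so $T_xZ$ is $\omega$-isotropic. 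The $K$-translates of such $x$ form a dense subset of $Z_{\mathrm{sm}}$ and $K$ preserves $\omega$, hence $\omega|_{Z_{\mathrm{sm}}}=0$, i.e.\ $Z$ is isotropic, and applying this to every component of $W$ proves the theorem. The step I expect to carry the real content, rather than being formal, is the appeal to the Lagrangian property of orbital varieties; the points needing the most care are the reduction to a single $\gamma$ (the finiteness behind the Hesselink stratification) and, in this last paragraph, the existence of a point of $Z_0$ that is simultaneously smooth in $Z$ and a point of surjectivity of $d\psi$, which is exactly where the $K$-equivariance of $\psi$ is used.
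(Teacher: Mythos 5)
Your argument is correct, but it establishes the key isotropy by a different mechanism than the paper. Both proofs share the Hilbert--Mumford reduction (Corollary~\ref{HM}): an irreducible component $Z$ of $\EuScript O\cap\mathfrak k^\bot\cap\mathrm N_{\mathfrak k}\mathfrak g^*$, being $K$-stable, lies in $K\big(\mathfrak g^{>0}_h\cap\mathfrak k^\bot\big)$ for a single rational semisimple $h=\dot\gamma\in\mathfrak k$ (your re-derivation of the finiteness via chambers and the properness of $K/P_\gamma$ is just the content of that corollary). From there the paper argues in one stroke: it identifies $K(\mathfrak n_h\cap\mathfrak k^\bot)$ with the image under the moment map $\mathrm T^*(G/G_h^{\ge 0})\to\mathfrak g^*$ of the conormal bundle $\mathrm N^*_{S_K/S_G}$ to $K/K_h^{\ge 0}\subset G/G_h^{\ge 0}$, and invokes the fact that moment-map images of isotropic subvarieties of a cotangent bundle are isotropic in the coadjoint orbits they meet. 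You instead split the tangent space $T_xZ=\mathrm{ad}^*_{\mathfrak k}x+T_xZ_0$ at a generic point and kill the two pieces (and their pairing) separately: the $K$-direction via the vanishing of the $\mathfrak k$-moment map on $\mu^{-1}(0)=\EuScript O\cap\mathfrak k^\bot$, and the $Z_0$-direction by embedding $\mathfrak g^{*}_{\gamma,>0}=\mathfrak p^\bot$ into $\mathfrak b^\bot\cong\mathfrak n$ and citing Spaltenstein's theorem that the components of $\EuScript O\cap\mathfrak n$ are Lagrangian. What each approach buys: the paper's conormal-bundle picture is more uniform and needs no input about orbital varieties, only the standard moment-map lemma (Borho--Brylinski style); your decomposition makes transparent which hypothesis is responsible for which cancellation, at the cost of importing Spaltenstein and the generic-smoothness/equivariance bookkeeping for $\psi\colon K\times Z_0\to Z$ --- which you handle correctly, including the two points you rightly flag as delicate (passing to a resolution to see that subvarieties of isotropic varieties are isotropic, and the existence of a simultaneously good point $x\in Z_0$).
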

Let $\mathrm{V}^._{\mathfrak g, \mathfrak k}$ be the set of all irreducible components of possible intersections of $\mathrm N_K\mathfrak k^\bot$ with the $G$-orbits in $\mathrm N_G\mathfrak g^*$. This finite set of subvarieties of $\mathfrak g^*$ determines a finite set $\EuScript V^._{\mathfrak g, \mathfrak k}$ of subvarieties of T$^*X$ and a finite set $\mathrm L^._{\mathfrak g, \mathfrak k}$ of subvarieties of $X$ (see Definition~\ref{kus} below).
\begin{theorem}\upshape Let $M$ be a finitely generated $(\mathfrak g, \mathfrak k)$-module of finite type which affords a central character and $(L, S)$ be the corresponding pair consisting of a variety and a coherent sheaf as before. Then $L$ is an element of $\mathrm L^._{\mathfrak g, \mathfrak k}$.\end{theorem}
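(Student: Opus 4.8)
The plan is to reduce the assertion to a statement about the characteristic variety of $\mathrm{Ind}M$ inside $T^*X$ and then to identify that variety using Theorem~1 and Theorem~\ref{G}. We may assume $M$ is simple (otherwise run the argument on each composition factor, again a finitely generated $(\mathfrak g,\mathfrak k)$-module of finite type affording a central character, whose $\EuScript D^\lambda(X)$-module has support inside that of $\mathrm{Ind}M$). By Theorem~1 and the classification recalled above, $\mathrm{Ind}M$ is then a simple holonomic $\EuScript D^\lambda(X)$-module with data $(L,S)$, with $L$ irreducible, and $\overline{T^*_LX}$ is an irreducible component of $\mathrm{Ch}(\mathrm{Ind}M)\subset T^*X$. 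The goal is to show that $\overline{T^*_LX}$ is one of the varieties in $\EuScript V^._{\mathfrak g,\mathfrak k}$, so that $L=\overline{\pi(\overline{T^*_LX})}\in\mathrm L^._{\mathfrak g,\mathfrak k}$, where $\pi\colon T^*X\rightarrow X$ is the projection.

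The first step is a bound on the associated variety $V(M)\subset\mathfrak g^*$, the support of $\mathrm{gr}M$ for a good filtration. Take the filtration $M_{\le i}=U(\mathfrak g)_{\le i}M_0$ attached to a finite-dimensional $K$-stable generating subspace $M_0$; by $\mathrm{Ad}$-equivariance each $M_{\le i}$ is a $K$-submodule, hence a $\mathfrak k$-submodule. Three remarks apply. (i) Since $M$ affords a central character, the principal symbols of $z-\chi(z)$, $z$ running over generators of $\mathrm Z(\mathfrak g)$, annihilate $\mathrm{gr}M$ and generate the augmentation ideal $S(\mathfrak g)^G_+$, so $V(M)\subset\mathrm N_G\mathfrak g^*$. (ii) Since each $M_{\le i}$ is a $\mathfrak k$-submodule, the image of $\mathfrak k$ in $\mathrm{gr}_1U(\mathfrak g)$ kills $\mathrm{gr}M$, which is therefore a module over $S(\mathfrak g)/S(\mathfrak g)\mathfrak k=\mathbb F[\mathfrak k^\bot]$, so $V(M)\subset\mathfrak k^\bot$. (iii) Since $M$ is of finite type each $\mathfrak k$-isotypic component of the graded module $\mathrm{gr}M$ is finite-dimensional, hence supported in finitely many degrees, so every positive-degree $K$-invariant $f\in S(\mathfrak g)^K$ — which preserves isotypic components and strictly raises degree — acts nilpotently on each of them; localising the $K$-isotypic decomposition of $\mathrm{gr}M$ over $S(\mathfrak g)^K$ then shows that its support in $\mathfrak g^*/\!\!/K$ is the cone point, i.e.\ $V(M)\subset\mathrm N_{\mathfrak k}\mathfrak g^*$. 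Thus $V(M)$ is a conical $K$-stable closed subvariety of $\mathrm N_G\mathfrak g^*\cap\mathfrak k^\bot\cap\mathrm N_{\mathfrak k}\mathfrak g^*$, which, as invariants of reductive groups are exact, coincides with $\mathrm N_G\mathfrak g^*\cap\mathrm N_K\mathfrak k^\bot$; hence each irreducible component of $V(M)$, meeting densely the nilpotent orbit $\EuScript O$ generic on it, is contained in $\overline V$ for a component $V$ of $\EuScript O\cap\mathrm N_K\mathfrak k^\bot$, i.e.\ $V\in\mathrm{V}^._{\mathfrak g,\mathfrak k}$.

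The second step relates $\mathrm{Ch}(\mathrm{Ind}M)$ to $V(M)$ via the moment map $\mu\colon T^*X\rightarrow\mathrm N_G\mathfrak g^*$ (the Springer resolution). Filtering $\mathrm{Ind}M=\EuScript D^\lambda\otimes_{U(\mathfrak g)}M$ by the order filtration of $\EuScript D^\lambda$ tensored with the above filtration of $M$, the associated graded is a quotient of $(\pi_*\EuScript O_{T^*X})\otimes_{\mathbb F[\mathrm N_G\mathfrak g^*]}\mathrm{gr}M$, where $\pi_*\EuScript O_{T^*X}=\mathrm{gr}\,\EuScript D^\lambda$ carries its $\mathbb F[\mathrm N_G\mathfrak g^*]$-algebra structure through $\mu$; hence $\mathrm{Ch}(\mathrm{Ind}M)\subset\mu^{-1}(V(M))$. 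In particular $\mu(\overline{T^*_LX})\subset V(M)$ is irreducible, so it lies in one $\overline V$ with $V\in\mathrm{V}^._{\mathfrak g,\mathfrak k}$, and $\overline{T^*_LX}\subset\mu^{-1}(\overline V)$. Now bound dimensions: stratify $\overline V$ by the nilpotent orbits $\EuScript O'$ it meets; each $\overline V\cap\EuScript O'$ lies in $\EuScript O'\cap\mathfrak k^\bot\cap\mathrm N_{\mathfrak k}\mathfrak g^*$, hence by Theorem~\ref{G} is isotropic in $\EuScript O'$, so $\dim(\overline V\cap\EuScript O')\le\tfrac12\dim\EuScript O'$, while the fibre of the Springer map over a point of $\EuScript O'$ has dimension $\tfrac12(\dim\mathrm N_G\mathfrak g^*-\dim\EuScript O')=\dim X-\tfrac12\dim\EuScript O'$; summing over the finitely many orbits, $\dim\mu^{-1}(\overline V)\le\dim X$. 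Since $\overline{T^*_LX}$ is irreducible, closed in $\mu^{-1}(\overline V)$, and of dimension exactly $\dim X$, it is an irreducible component of $\mu^{-1}(\overline V)$; matching the construction of Definition~\ref{kus}, $\overline{T^*_LX}\in\EuScript V^._{\mathfrak g,\mathfrak k}$ and therefore $L=\overline{\pi(\overline{T^*_LX})}\in\mathrm L^._{\mathfrak g,\mathfrak k}$.

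The substantive inputs are Theorem~1 and Theorem~\ref{G}, which I am granting; with them in hand, the point that needs care is the bookkeeping at the end. A priori $\mu(\overline{T^*_LX})$ may lie over a nilpotent orbit strictly smaller than the one generic on $V$, so the component of $\mu^{-1}(\overline V)$ containing $\overline{T^*_LX}$ must still be shown to be one of the varieties produced by Definition~\ref{kus}; this is where it matters that $\mathrm{V}^._{\mathfrak g,\mathfrak k}$ is assembled from the intersections with all nilpotent orbits, so that rerunning the closure-and-dimension argument with the orbit generic on $\mu(\overline{T^*_LX})$ lands one on a genuine member of $\mathrm{V}^._{\mathfrak g,\mathfrak k}$ again. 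A lesser technical point is the precise compatibility of the two filtrations that makes the inclusion $\mathrm{Ch}(\mathrm{Ind}M)\subset\mu^{-1}(V(M))$ hold for the possibly irregular twist $\lambda$.
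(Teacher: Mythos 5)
Your proof is correct, but it takes a genuinely different route from the paper's. The paper works entirely downstairs in $\mathfrak g^*$: it combines Gabber's theorem (Theorem~\ref{Gab}), which makes each irreducible component $\tilde V$ of $\mathrm V(M)$ coisotropic in its generic orbit $\EuScript O$, with Theorem~\ref{G}, which makes $\tilde V\subset\EuScript O\cap\mathrm N_K\mathfrak k^\bot$ isotropic; hence $\tilde V$ is Lagrangian and, by maximality of dimension among isotropic subvarieties, an irreducible component of $\EuScript O\cap\mathrm N_K\mathfrak k^\bot$, i.e.\ an element of $\mathrm V^._{\mathfrak g, \mathfrak k}$; the transfer to $\EuScript V^._{\mathfrak g, \mathfrak k}$ and $\mathrm L^._{\mathfrak g, \mathfrak k}$ is left implicit (via Barlet--Kashiwara). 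You never invoke Gabber. Instead you take the holonomic datum $(L,S)$ as given, use that the conormal variety $\overline{T^*_LX}$ has dimension $\dim X$ for free, trap it inside $\mu^{-1}(\overline V)$ for some $V\in\mathrm V^._{\mathfrak g, \mathfrak k}$ via $\mathrm{Ch}(\mathrm{Ind}M)\subset\mu^{-1}(\mathrm V(M))$ and $\mathrm V(M)\subset\mathrm N_G\mathfrak g^*\cap\mathrm N_K\mathfrak k^\bot$ (your steps (i)--(iii) correctly reprove the paper's Lemmas 2 and 3), and then bound $\dim\mu^{-1}(\overline V)\le\dim X$ by the Springer-fibre count over the orbit stratification, with Theorem~\ref{G} supplying $\dim(\overline V\cap\EuScript O')\le\frac{1}{2}\dim\EuScript O'$ on each stratum. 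What each approach buys: the paper's argument yields the sharper conclusion that the components of $\mathrm V(M)$ are themselves elements of $\mathrm V^._{\mathfrak g, \mathfrak k}$, whereas you only get containment in closures downstairs; conversely, your argument makes fully explicit the passage from $\mathfrak g^*$ to $\mathrm T^*X$ to $X$ that the paper glosses over, and your closing remark about rerunning the argument with the orbit generic on $\mu(\overline{T^*_LX})$ correctly disposes of the one real subtlety, namely that the preimage in Definition~\ref{kus} could a priori acquire the relevant component over a smaller orbit. One caveat on logical order: in the paper Theorem 1 is itself deduced from the Lagrangian property established in this very proof, so by granting Theorem 1 you invert the paper's dependency; this is not circular, since Theorem 1 has an independent proof via Gabber and since the hypothesis that the pair $(L,S)$ exists already presupposes holonomicity, but it is worth noting that the $\dim X$ input you get from the conormal variety is exactly what the paper extracts from coisotropicity. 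Your reduction to simple $M$ tacitly uses finite length, which again follows from holonomicity.
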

\section{Preliminaries}
We work in the category of algebraic varieties over $\mathbb F$. By T$^*X$ we denote the total space of the cotangent bundle of a smooth variety $X$ and by T$^*_xX$ the cotangent space to $X$ at a point $x$. By N$_{Y/X}^*\subset$T$^*X|_Y$ we denote the conormal bundle to a smooth subvariety $Y\subset X$.
\subsection{D-modules versus $\mathfrak g$-modules.}
Let $\mathfrak h$ be a Cartan subalgebra of $\mathfrak g$ and $\Sigma\subset\mathfrak h$ be the root system of $\mathfrak g$ and $\Sigma_+$ be a set of positive roots. Denote by $\hat{\mathfrak h^*}$ the set of weights $\lambda$ such that $\alpha^\vee(\lambda)$ is not a strictly positive integer for any positive root $\alpha^\vee$ of the dual root system $\Sigma^\vee\subset\mathfrak h^*$.

For a fixed $\lambda$ we denote by $\EuScript D^{\lambda}(X)$ the sheaf of twisted differential operators on $X$ and by D$^{\lambda}(X)$ its space of global sections. The algebras D$^{\lambda} (X)$ and D$^{\mu} (X)$ are naturally identified if $\lambda$ and $\mu$ lie in one shifted orbit of Weyl group~\cite{HDWJ}. Moreover, any such orbit intersects $\hat{\mathfrak h^*}$~\cite{HDWJ}. If $\lambda\in\hat{\mathfrak h^*}$ the surjective homomorphism \begin{center}$\tau:\mathrm U(\mathfrak g)\to\mathrm D^{\lambda}(X)$\end{center} identifies the category of quasicoherent D$^{\lambda} (X)$-modules and the category of $\mathfrak g$-modules affording the central character $\chi=\chi_\lambda$. A. Beilinson and J. Bernstein have proved that both above categories are equivalent to the category of $\EuScript D^{\lambda}(X)$-modules:\\
$$\begin{tabular}{c|c}Res: $\EuScript D^\lambda(X)-$mod$\longrightarrow$ $\mathfrak g$-mod$^\chi$&Ind: $\EuScript D^\lambda(X)$-mod$\longleftarrow\mathfrak g$-mod$^\chi$\\$\EuScript F\to \Gamma(X, \EuScript F)$&\hspace{26pt}$M\otimes_{(1\otimes\tau)\mathrm U(g)}\EuScript D(X)\longleftarrow M$\hspace{10pt}.\\\end{tabular}$$

In general Res and Ind identify the category $\mathfrak g$-mod$^\chi$ with a certain quotient of the category $\EuScript D^\lambda(X)$-mod. For more detailed exposition of the topic see for example~\cite{HDWJ}.
\subsection{Three faces of the support variety.}
The algebra U$(\mathfrak g)$ has a natural filtration such that gr U$(\mathfrak g)$=S$(\mathfrak g)$. The filtration on U$(\mathfrak g)$ induces a filtration on any finitely generated $\mathfrak g$-module $M$. We denote the associated graded S$(\mathfrak g)$-module by gr$M$. We denote the support of gr$M$ inside $\mathfrak g^*=$Spec~S$(\mathfrak g)$ by V$(M)$.

A similar argument constructs for a $\EuScript D^\lambda(X)$-module $\EuScript F$ a positive cycle $\EuScript V(\EuScript F)$, i.e. a formal linear combination of irreducible subvarieties of T$^*X$ with positive integer coefficients.
\begin{definition}\upshape The {\it singular support} $\EuScript V(M)$ of a simple $\mathfrak g$-module $M$ is the cycle $\EuScript V(\mathrm{Ind}M)$ in $\mathrm T^*X$.\end{definition}
\begin{definition}\upshape The {\it support variety} $\mathrm L(M)$ of a simple $\mathfrak g$-module $M$ is the projection of $\EuScript V(M)$ to $X$.\end{definition}
Let $\EuScript X$ be a $G$-variety for a reductive group $G$ with Lie algebra $\mathfrak g$. The map $\phi: $T$^*\EuScript X\times\mathfrak g\to\mathbb F ( \{(l,x),g\}\to l(gx), x\in X, l\in$T$^*_x\EuScript X, g\in\mathfrak g)$ determines a map $\phi_\EuScript X:$T$^*\EuScript X\to\mathfrak g^*$ called the {\it moment map}. D.Barlet and M.Kashiwara~\cite{BaKs}, have proved that V$(M)=\phi_X(\EuScript V($Ind$M))$. Therefore we have a diagram
\begin{center}\hspace{40pt}$\xymatrix{&\EuScript V(M)\subset\mathrm T^*X\ar[ld]^{\phi_X}\ar[rd]^{\mathrm{pr}}& &\\\mathrm V(M)\subset\mathfrak g^*&&\mathrm L(M)\subset X\hspace{39pt}.}$\end{center}
\begin{lemma}[\cite{VP}]\upshape  Let $K$ be a reductive algebraic group with a Lie algebra $\mathfrak k$ and let $X$ be an affine $K$-variety. Then $\mathbb F[X]$ is a $\mathfrak k$-module of finite type if and only if $X$ contains finitely many closed $K$-orbits. In this case any irreducible component of $X$ contains precisely one closed $K$-orbit.\end{lemma}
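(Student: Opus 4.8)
The plan is to reduce the statement to the geometry of the categorical quotient. Since the $\mathfrak k$-action on $\mathbb F[X]$ is the differential of the $K$-action, it only involves the identity component, so I may assume $K$ is connected; then $\mathbb F[X]^{\mathfrak k}=\mathbb F[X]^{K}=:R$ and the $\mathfrak k$-isotypic and $K$-isotypic decompositions of $\mathbb F[X]$ coincide. Let $\pi\colon X\rightarrow X/\!/K:=\operatorname{Spec}R$ be the quotient morphism. I will freely use the standard facts of invariant theory for a reductive $K$: the algebra $R$ is finitely generated over $\mathbb F$, the points of $X/\!/K$ are in natural bijection with the closed $K$-orbits of $X$, each fibre of $\pi$ contains a unique closed orbit, and this closed orbit lies in the closure of every $K$-orbit of the fibre. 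In particular $X$ has finitely many closed orbits if and only if the set $X/\!/K$ is finite, equivalently (as $R$ is a finitely generated $\mathbb F$-algebra) if and only if $R$ is finite-dimensional over $\mathbb F$.

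Next I would record the isotypic decomposition of the rational $K$-module $\mathbb F[X]$, writing $\mathbb F[X]=\bigoplus_{\lambda}V_\lambda\otimes M_\lambda$, where $\lambda$ runs over the irreducible $K$-modules and $M_\lambda=\operatorname{Hom}_K(V_\lambda,\mathbb F[X])=(\mathbb F[X]\otimes V_\lambda^*)^K$ is the multiplicity module, naturally a module over $R$. The $\lambda$-isotypic component is finite-dimensional precisely when $M_\lambda$ is, so $\mathbb F[X]$ is of finite type if and only if every $M_\lambda$ is finite-dimensional. The forward implication is then immediate: if $\mathbb F[X]$ is of finite type, the trivial isotypic component $M_{\mathrm{triv}}=R$ is finite-dimensional, whence $X/\!/K$ is finite and $X$ has finitely many closed orbits.

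For the converse I would invoke the finiteness theorem for covariants: because $K$ is reductive, $\mathbb F[X]\otimes V_\lambda^*$ is a finitely generated $\mathbb F[X]$-module with a compatible rational $K$-action, and hence its module of invariants $M_\lambda$ is a finitely generated $R$-module. If $X$ has finitely many closed orbits then $R$ is a finite-dimensional, hence Artinian, $\mathbb F$-algebra, so every finitely generated $R$-module is finite-dimensional; applying this to each $M_\lambda$ shows that all isotypic components are finite-dimensional and that $\mathbb F[X]$ is of finite type. I expect this finiteness of covariants to be the technical heart of the argument: it is exactly the point where linear reductivity of $K$ (the exactness of the invariants functor, or the Reynolds operator) is used in an essential way.

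Finally, to prove the statement about components, assume $X/\!/K$ is finite and let $X_i$ be an irreducible component of $X$. Since $K$ is connected it preserves $X_i$, and $\pi(X_i)$ is an irreducible subset of the finite set $X/\!/K$, hence a single point $p_i$; thus $X_i\subseteq\pi^{-1}(p_i)$. Let $O_i$ denote the unique closed orbit in $\pi^{-1}(p_i)$. For any $x\in X_i$ the orbit closure $\overline{Kx}$ is contained in the closed, $K$-stable set $X_i$ and contains the closed orbit of its fibre, namely $O_i$; therefore $O_i\subseteq X_i$. Conversely, any closed orbit contained in $X_i$ lies in $\pi^{-1}(p_i)$ and so must equal $O_i$. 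Hence $X_i$ contains precisely one closed $K$-orbit, which completes the proof.
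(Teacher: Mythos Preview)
The paper does not actually prove this lemma: it is stated with a citation to Vinberg--Popov \cite{VP} and no argument is given. Your proposal, by contrast, supplies a complete and correct proof along the standard lines one finds in geometric invariant theory: reduce to the categorical quotient $\pi\colon X\to X/\!/K=\operatorname{Spec}\mathbb F[X]^K$, use the bijection between points of $X/\!/K$ and closed $K$-orbits, and then control the multiplicity modules $M_\lambda=(\mathbb F[X]\otimes V_\lambda^*)^K$ via the finiteness theorem for modules of covariants over $\mathbb F[X]^K$. The forward implication (finite type $\Rightarrow$ finitely many closed orbits) via the trivial isotypic component, and the converse via Artinianness of $\mathbb F[X]^K$, are both clean and correct; the argument for the component statement using irreducibility of $\pi(X_i)$ is likewise fine once $K$ is connected.

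One small remark on your reduction to connected $K$: for the equivalence this is harmless, since a closed $K$-orbit is a finite union of closed $K^0$-orbits and conversely, so finiteness of one is equivalent to finiteness of the other. For the last assertion one should keep in mind that when $K$ is disconnected an irreducible component need not be $K$-stable, so ``contains precisely one closed $K$-orbit'' must be read with a grain of salt; but as the paper only ever applies the lemma with $K$ connected (it takes $K$ to be the connected subgroup of $G$ with Lie algebra $\mathfrak k$), your reduction is entirely adequate for the intended use.
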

\begin{lemma}\upshape A finitely generated $(\mathfrak g, \mathfrak k)$-module $M$ is of finite type if and only if its associated variety $\mathrm V(M)$ has finitely many closed $\mathfrak k$-orbits. In this case the set of closed orbits consists just of the zero orbit.\end{lemma}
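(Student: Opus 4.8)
The plan is to pass to the associated graded module and reduce the statement to the Vinberg--Popov criterion recorded in the preceding lemma (with $K$ a reductive group with Lie algebra $\mathfrak k$). First I would fix a finite-dimensional subspace of $M$ generating it over $\mathrm U(\mathfrak g)$; by local finiteness of the $\mathfrak k$-action it lies in a finite-dimensional $\mathfrak k$-stable subspace $F_0M$, and then $F_iM:=\mathrm U(\mathfrak g)_iF_0M$ is a good filtration by finite-dimensional $\mathfrak k$-submodules. Since $\mathbb F$ has characteristic $0$ and $\mathfrak k$ is reductive in $\mathfrak g$, all finite-dimensional $\mathfrak k$-modules in sight are semisimple, so this filtration splits $\mathfrak k$-equivariantly and $M\cong\mathrm{gr}\,M$ as $\mathfrak k$-modules; in particular $M$ is of finite type over $\mathfrak k$ if and only if $\mathrm{gr}\,M$ is. Here $\mathrm{gr}\,M$ is a finitely generated $\mathfrak k$-equivariant $\mathrm S(\mathfrak g)$-module, its annihilator $I\subset\mathrm S(\mathfrak g)$ is a $\mathfrak k$-stable homogeneous ideal, and so $\mathrm V(M)=\mathrm{Supp}(\mathrm{gr}\,M)$ is a conical, $K$-stable, affine subvariety of $\mathfrak g^{*}$.

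The heart of the proof is the equivalence: $\mathrm{gr}\,M$ is of finite type over $\mathfrak k$ if and only if $\mathbb F[\mathrm V(M)]$ is. Put $A=\mathrm S(\mathfrak g)/I$, a finitely generated $\mathfrak k$-equivariant algebra with $A_{\mathrm{red}}=\mathbb F[\mathrm V(M)]$, over which $\mathrm{gr}\,M$ is finitely generated and faithful. If $\mathbb F[\mathrm V(M)]$ is of finite type, then the nilradical $N\subset A$ is $\mathfrak k$-stable (a derivation in characteristic $0$ sends nilpotents to nilpotents) and nilpotent, each $N^{j}/N^{j+1}$ is a finitely generated $A_{\mathrm{red}}$-module and hence a quotient of $A_{\mathrm{red}}\otimes U$ for some finite-dimensional $\mathfrak k$-module $U$, hence of finite type over $\mathfrak k$; as an iterated extension of finite-type $\mathfrak k$-modules is of finite type, $A$ is, and then so is its finitely generated module $\mathrm{gr}\,M$. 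For the converse, choose a finite-dimensional $\mathfrak k$-stable $W\subset\mathrm{gr}\,M$ generating it over $A$; the assignment $a\mapsto(w\mapsto aw)$ is a $\mathfrak k$-equivariant embedding $A\hookrightarrow\mathrm{Hom}_{\mathbb F}(W,\mathrm{gr}\,M)=W^{*}\otimes\mathrm{gr}\,M$ (injective because $\mathrm{gr}\,M$ is $A$-faithful), whose target is of finite type (a finite-dimensional $\mathfrak k$-module tensored with a finite-type one); hence the submodule $A$, and its quotient $\mathbb F[\mathrm V(M)]$, are of finite type. The previous lemma, applied to the affine $K$-variety $\mathrm V(M)$, then says that $\mathbb F[\mathrm V(M)]$ is of finite type over $\mathfrak k$ if and only if $\mathrm V(M)$ has finitely many closed $K$-orbits. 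Concatenating these equivalences with that of the first paragraph proves the ``if and only if'' part.

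For the last assertion, suppose $\mathrm V(M)$ has finitely many closed $K$-orbits. By the previous lemma each irreducible component $Y$ of $\mathrm V(M)$ contains precisely one closed $K$-orbit. Since $I$ is homogeneous $\mathrm V(M)$ is conical, and since $\mathbb G_m$ is connected it fixes each of the finitely many irreducible components; so every component $Y$ is a nonempty conical affine variety, whence $0\in Y$ and $\{0\}$ is a closed $K$-orbit contained in $Y$. By the uniqueness just quoted, $\{0\}$ is the only closed $K$-orbit in $Y$; as every closed $K$-orbit of $\mathrm V(M)$ lies in some component, $\{0\}$ is the only closed $K$-orbit of $\mathrm V(M)$.

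The main obstacle will be the equivalence in the second paragraph, relating finiteness of the $\mathfrak k$-type decomposition of $\mathrm{gr}\,M$ to that of the reduced coordinate ring of its support. Its two load-bearing points are the faithfulness embedding $A\hookrightarrow W^{*}\otimes\mathrm{gr}\,M$ and the $\mathfrak k$-stability of the nilradical, both elementary; granting these, the rest is the Vinberg--Popov lemma together with complete reducibility of finite-dimensional $\mathfrak k$-modules.
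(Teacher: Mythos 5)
Your proof is correct, and its skeleton --- reduce the finite-type property of $M$ to that of the coordinate ring of $\mathrm V(M)$ by sandwiching maps in both directions through a finite-dimensional $\mathfrak k$-stable generating subspace, invoke the Vinberg--Popov lemma, then use conicality of $\mathrm V(M)$ to identify the unique closed orbit in each component with $\{0\}$ --- is the same as the paper's. Where you genuinely diverge is in how nilpotents and torsion are disposed of. The paper never touches the non-reduced annihilator: it filters $\mathrm{gr}\,M$ by the submodules killed by powers of the radical ideal $\mathrm J_M$ of $\mathrm V(M)$, passes to the associated graded object $\overline{\mathrm{gr}}M$, which it verifies is a \emph{faithful} module over the reduced ring $\mathrm S(\mathfrak g)/\mathrm J_M$, and in the converse direction obtains injectivity of $\mathrm S(\mathfrak g)/\mathrm J_M\to M_0^*\otimes\overline{\mathrm{gr}}M$ by choosing $M_0$ to be a finite-dimensional $\mathfrak k$-stable complement to the torsion submodule $\mathrm{Rad}\,M$. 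You instead keep $\mathrm{gr}\,M$ itself, which is faithful over the non-reduced ring $A=\mathrm S(\mathfrak g)/I$ by the very definition of $I$, get injectivity of $A\hookrightarrow W^*\otimes\mathrm{gr}\,M$ directly from faithfulness plus the fact that $W$ generates, and then descend from $A$ to $A_{\mathrm{red}}=\mathbb F[\mathrm V(M)]$ via the finite $\mathfrak k$-stable nilradical filtration. Your version is arguably cleaner: it avoids the construction of $\overline{\mathrm{gr}}M$ and the check that its annihilator in $\mathrm S(\mathfrak g)/\mathrm J_M$ vanishes, at the modest cost of the (standard) observations that the nilradical of $A$ is $\mathfrak k$-stable and that finite type is preserved under finite filtrations. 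Both arguments share the same preliminary step --- $M$ is of finite type iff $\mathrm{gr}\,M$ is --- which the paper leaves implicit and you at least state; strictly speaking the splitting $M\cong\mathrm{gr}\,M$ uses semisimplicity of $M|_{\mathfrak k}$, which is the standard convention in this setting.
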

\begin{proof} Let J$_M$ be the annihilator of V$(M)$ in S$(\mathfrak g)$. Consider the S$(\mathfrak g)$-modules\begin{center}  J$_M^{-i}\{0\}:=\{m\in$gr$M\mid j_1...j_im=0$ for all $j_1,...,j_i\in$J$_M~\}$.\end{center} One can easily see that these modules form an ascending filtration of gr$M$ such that $\cup_{i=1}^\infty$J$_M^{-i}\{0\}=$gr$M$. Since S$(\mathfrak g)$ is a Noetherian ring, the filtration stabilizes, i.e. J$_M^{-i}\{0\}=$gr$M$ for some $i$. By $\overline{\mathrm{gr}}M$ we denote the corresponding graded object. By definition, $\overline{\mathrm{gr}}M$ is an S$(\mathfrak g)/$J$_M$-module. Suppose that $f\overline{\mathrm{gr}}M=0$ for some $f\in$S$(\mathfrak g)$. Then $f^i$gr$M=0$ and hence $f\in$J$_M$. This proves that the annihilator of $\mathrm{\overline{gr}}M$ in $\mathrm S(\mathfrak g)/\mathrm J_M$ equals zero.

Suppose V$(M)$ has a unique closed $\mathfrak k$-orbit.  Let $M_0$ be a $\mathfrak k$-stable space of generators of $\mathrm{\overline{gr}}M$. Then there is a surjective homomorphism $M_0\otimes_\mathbb F$(S$(\mathfrak g)/$J$_M)\to\overline{\mathrm{gr}}M$. Since V$(M)$ has finitely many closed $\mathfrak k$-orbits, $M_0\otimes_\mathbb F$(S$(\mathfrak g)/$J$_M$) is a $\mathfrak k$-module of finite type. Therefore $\overline{\mathrm gr}M$ is of finite type, which implies that $M$ is a  $(\mathfrak g, \mathfrak k)$-module of finite type.

Assume now that $M$ is of finite $\mathfrak k$-type. Set\begin{center}Rad $M=\{m\in\overline{\mathrm{gr}}M|$ there exists $f\in$S$(\mathfrak g)/$J$_M$ such that $fm=0$ and $f\ne 0\}$.\end{center} Then Rad $M$ is a proper $\mathfrak k$-stable submodule of $\overline{\mathrm{gr}}M$. Therefore there exists a finite-dimensional $\mathfrak k$-subspace $M_0\subset\overline{\mathrm{gr}}M$ such that $M_0\cap$Rad$M$=0. The homomorphism $M_0\otimes_\mathbb F$(S$(\mathfrak g)/$J$_M)\to \overline{\mathrm{gr}}M$ induces an injective homomorphism S$(\mathfrak g)/$J$_M\to M_0^*\otimes_\mathbb F\overline{\mathrm{gr}}M$. Therefore S$(\mathfrak g)/$J$_M$ is of finite $\mathfrak k$-type and V($M$) has only finitely many closed $\mathfrak k$-orbits. As V$(M)$ is $\mathbb F^*$-stable, any irreducible component of it contains point 0 and this point is a closed $\mathfrak k$-orbit.\end{proof}
\begin{lemma}[S. Fernando~\cite{F}]\upshape If $M$ is a finitely generated $(\mathfrak g, \mathfrak k)$-module then $$\mathrm V(M)\subset\mathfrak k^\bot:=\{x\in\mathfrak g^*:\forall k\in\mathfrak k~x(k)=0\}~~.$$\end{lemma}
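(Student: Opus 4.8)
The plan is to show that every element $k$ of a basis of $\mathfrak k$, regarded as a linear function on $\mathfrak g^*$ via the embedding $\mathfrak k\subset\mathfrak g\subset\mathrm S(\mathfrak g)$, vanishes on $\mathrm V(M)$; since $\mathfrak k^\bot$ is by definition the common zero locus of these linear functions, this is exactly the assertion. Writing $\mathrm J_M$ for the annihilator of $\mathrm{gr}\,M$ in $\mathrm S(\mathfrak g)$ and recalling that $\mathrm V(M)=\mathrm{Supp}(\mathrm{gr}\,M)=V(\mathrm J_M)$ because $\mathrm{gr}\,M$ is finitely generated over the Noetherian ring $\mathrm S(\mathfrak g)$, the condition ``$\bar k$ vanishes on $\mathrm V(M)$'' is equivalent, by the Nullstellensatz, to $\bar k\in\sqrt{\mathrm J_M}$, i.e.\ to $\bar k^{\,N}\cdot\mathrm{gr}\,M=0$ for some $N$. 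So the goal reduces to proving that the principal symbol $\bar k\in\mathrm S^1(\mathfrak g)$ acts nilpotently on $\mathrm{gr}\,M$.

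First I would fix a convenient good filtration. Since $M$ is finitely generated, choose a finite-dimensional $\mathfrak g$-generating subspace $M_0\subset M$ and set $M_i=\mathrm U_i(\mathfrak g)M_0$. Then $\mathrm{gr}\,M=\bigoplus_i M_i/M_{i-1}$ is generated as an $\mathrm S(\mathfrak g)$-module by the degree-zero component $M_0$, because $\mathrm S_i(\mathfrak g)=\mathrm U_i(\mathfrak g)/\mathrm U_{i-1}(\mathfrak g)$ maps $M_0$ onto $M_i/M_{i-1}$ (as $\mathrm U_i(\mathfrak g)M_0=M_i$). Hence it suffices to exhibit an $N$, independent of $m$, with $\bar k^{\,N}\bar m=0$ for every class $\bar m$ of $m\in M_0$: the passage from generators to all of $\mathrm{gr}\,M$ is formal and uses only that $\mathrm S(\mathfrak g)$ is commutative, so that $\bar k^{\,N}$ commutes with the module action and therefore kills $\sum f_j\bar m_j$ as soon as it kills each $\bar m_j$. (Nothing is lost by working with this particular filtration, since $\mathrm V(M)$ is independent of the choice of good filtration.)

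The hypothesis is used precisely here. Because $M$ is a $(\mathfrak g,\mathfrak k)$-module, each basis vector of $M_0$ lies in a finite-dimensional $\mathfrak k$-submodule, so $M_0$ is contained in a finite-dimensional $\mathfrak k$-submodule $W\subset M$ (take the sum of these). Being finite-dimensional, $W$ is contained in a \emph{single} step $M_d$ of the filtration; being $\mathfrak k$-stable, $W$ is invariant under the action of every $k\in\mathfrak k$. Consequently $k^{\,N}m\in W\subseteq M_d$ for all $m\in M_0$ and all $N$. Thus, as soon as $N\geq d+1$, the class $\bar k^{\,N}\bar m$, which is the image of $k^{\,N}m$ in $M_N/M_{N-1}$, is zero because $k^{\,N}m\in M_d\subseteq M_{N-1}$. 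Hence $\bar k^{\,d+1}\cdot\mathrm{gr}\,M=0$, so $\bar k$ vanishes on $\mathrm V(M)$; ranging over a basis of $\mathfrak k$ gives $\mathrm V(M)\subseteq\mathfrak k^\bot$.

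The single load-bearing step is the uniform degree bound $W\subseteq M_d$: it is the local $\mathfrak k$-finiteness of $M$ that confines the entire $\mathfrak k$-orbit of the chosen generators to a fixed step of the filtration, and this is exactly what upgrades ``$\bar k$ acts locally nilpotently'' to the genuine nilpotency $\bar k^{\,d+1}\cdot\mathrm{gr}\,M=0$ needed to place $\mathrm V(M)$ inside $\mathfrak k^\bot$. The remaining ingredients — that the standard filtration is good, that $\mathrm{gr}\,M$ is generated in degree zero, and the identification of the support with the vanishing locus of $\sqrt{\mathrm J_M}$ — are all routine.
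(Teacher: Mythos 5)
The paper does not prove this lemma; it is quoted from Fernando's article without argument, so there is no internal proof to compare against. Your proof is correct and complete: local $\mathfrak k$-finiteness of a finite generating set gives a finite-dimensional $\mathfrak k$-stable $W$ with $W\subseteq M_d$, the identification of $\bar k^{\,N}\bar m$ with the class of $k^{N}m$ in $M_N/M_{N-1}$ then kills the generators for $N\ge d+1$, and commutativity of $\mathrm S(\mathfrak g)$ together with generation of $\mathrm{gr}\,M$ in degree zero upgrades this to $\bar k^{\,d+1}\cdot\mathrm{gr}\,M=0$, i.e.\ $\bar k\in\sqrt{\mathrm J_M}$. This is essentially the standard argument found in Fernando's paper and in Penkov--Serganova--Zuckerman, so nothing further is needed.
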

\subsection{Hilbert-Mumford criterion.}Let $K$ be a reductive group, $X$ be an affine $K$-variety, $V$ be a $K$-module.
\begin{theorem}[Hilbert-Mumford criterion]\upshape The closure of any orbit $\overline{Kx}\subset X$ contains a unique closed orbit $K\overline x\subset X$. There exists a homomorphism $\mu:\mathbb F^*\to K$ such that $\lim\limits_{t\to 0}\mu(t)x=\bar x\in K\overline x$.\end{theorem}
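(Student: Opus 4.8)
The plan is to split the statement into two parts: the existence and uniqueness of a closed orbit inside $\overline{Kx}$, and the construction of a one-parameter subgroup degenerating $x$ into it. Throughout I would use that $\mathbb F[X]$ is a locally finite rational $K$-module, so that $X$ embeds $K$-equivariantly as a closed subvariety of a finite-dimensional $K$-module $V$; this linearizes all the limit computations. Existence of a closed orbit in $\overline{Kx}$ is then pure orbit geometry: the boundary $\overline{Kx}\setminus Kx$ is a union of $K$-orbits of strictly smaller dimension, so an orbit of minimal dimension among all orbits contained in the irreducible set $\overline{Kx}$ has empty boundary and is therefore closed.

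For uniqueness I would invoke reductivity of $K$ through its single most useful consequence: two disjoint closed $K$-invariant subsets of an affine $K$-variety are separated by a $K$-invariant regular function. If $O_1\ne O_2$ were two closed orbits inside $\overline{Kx}$, they would be disjoint, closed and $K$-stable, hence separated by some $f\in\mathbb F[X]^K$; but $f$ is constant on $Kx$ and thus, by continuity, constant on $\overline{Kx}\supset O_1\cup O_2$, a contradiction. This establishes that the closed orbit $K\overline x\subset\overline{Kx}$ is unique.

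The substantive part is producing the cocharacter. Choosing $y\in K\overline x\subset\overline{Kx}$ and noting that $y$ lies in the irreducible variety $\overline{Kx}$ with dense open subset $Kx$, I would pass through an irreducible curve meeting $Kx$ and hitting $y$; normalizing and completing at a point over $y$ gives a morphism $\mathrm{Spec}\,\mathbb F[[t]]\rightarrow\overline{Kx}$ sending the generic point into $Kx$ and the closed point to $y$. Lifting the generic point through the smooth surjective orbit map $K\rightarrow Kx$, after a ramified base change $t\mapsto t^e$ if needed, yields $g\in K(\mathbb F((t)))$ with $g(t)\cdot x$ integral over $\mathbb F[[t]]$ and equal to $y$ at $t=0$. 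I would then apply the Cartan decomposition in the loop group, $g=k_1\,\lambda(t)\,k_2$ with $k_1,k_2\in K(\mathbb F[[t]])$ and $\lambda\in X_*(T)$ a cocharacter of a maximal torus. Setting $c(t):=k_2(t)\cdot x$ and decomposing $c(t)=\sum_n c_n(t)$ by the $\lambda$-weight spaces $V_n$ of $V$, integrality of $k_1^{-1}g\cdot x=\lambda(t)c(t)=\sum_n t^n c_n(t)$ forces $c_n$ to have $t$-adic valuation at least $-n$ for $n<0$; hence the special value $x':=c(0)=k_2(0)\cdot x\in Kx$ has only nonnegative $\lambda$-weights, so $\lim_{t\rightarrow0}\lambda(t)\cdot x'$ exists and equals the weight-zero component of $x'$. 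Since $z:=\lim_{t\rightarrow0}\lambda(t)c(t)=k_1(0)^{-1}y$ lies in the closed orbit $K\overline x$, and that orbit is closed and $K$-stable, the identity $\lim_{t\rightarrow0}\lambda(t)\cdot x'=\lim_{s\rightarrow0}\lambda(s)^{-1}\cdot z$ shows this limit lands in $K\overline x$. Finally, conjugating by $k:=k_2(0)$ and putting $\mu(t):=k^{-1}\lambda(t)k$ moves the base point back to $x$, giving $\lim_{t\rightarrow0}\mu(t)\cdot x=k^{-1}\lim_{t\rightarrow0}\lambda(t)\cdot x'\in K\overline x$, as required.

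I expect the crux to be exactly this last stage: upgrading an arbitrary algebraic curve degenerating $x$ to $y$ into a genuine algebraic one-parameter subgroup of $K$. The passage from the $\mathbb F((t))$-point $g$ to its Cartan form is where reductivity of $K$ re-enters essentially, via the structure theory of $K(\mathbb F((t)))$, and the delicate bookkeeping is to track integrality through the weight decomposition so that the special fibre $x'$ acquires nonnegative weights and the resulting limit genuinely lies in the closed orbit rather than merely in $\overline{Kx}$. The first two parts, by contrast, are comparatively soft, with the separation property of invariant functions doing all the work for uniqueness.
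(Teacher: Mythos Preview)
The paper does not prove this statement at all: it is quoted as the classical Hilbert--Mumford criterion, a background result taken from the literature (the surrounding subsection cites Vinberg--Popov~\cite{VP}), and the paper proceeds directly to the next statement without argument. So there is no ``paper's own proof'' to compare against.

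That said, your proposal is a correct and essentially complete rendition of the standard proof. Existence by descending dimension and uniqueness via separation of disjoint closed invariant sets by invariants are exactly the usual soft arguments. Your cocharacter construction is the Iwahori/Cartan--decomposition proof: pick a formal arc in $\overline{Kx}$ hitting the closed orbit, lift to $g\in K(\mathbb F((t)))$ after a base change, write $g=k_1\lambda(t)k_2$, and read off from integrality of $\lambda(t)c(t)$ that the special value $x'=k_2(0)x$ has only nonnegative $\lambda$-weights. The one step worth articulating a bit more carefully is why the limit $\lim_{t\to0}\lambda(t)x'=c_0(0)$ actually lands in the closed orbit and not merely in $\overline{Kx}$: as you note, $z=k_1(0)^{-1}y$ has only nonpositive $\lambda$-weights, so $\lim_{s\to0}\lambda(s)^{-1}z$ exists and equals the same weight-zero piece $c_0(0)$; since each $\lambda(s)^{-1}z$ lies in the closed set $K\overline x$, so does the limit. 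With that spelled out, the argument is the textbook one.
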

The {\it null-cone} N$_{\mathfrak k}V:=\{x\in V: 0\subset \overline{Kx}\}$ is a closed algebraic subvariety of $V$~\cite{VP}.
\begin{theorem}\upshape Let $x\in V$ be a point. Then $0\in \overline{Kx}$ if and only if there exists a nonzero rational semisimple element $h\in\mathfrak k$ such that $x\in V^{>0}_h$, where $V^{> 0}_h$ is the direct sum of $h$-eigenspaces in $V$ with positive eigenvalues.\end{theorem}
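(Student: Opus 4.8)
The plan is to deduce this infinitesimal criterion from the one-parameter subgroup form of the Hilbert--Mumford criterion stated just above, by passing between one-parameter subgroups $\mu:\mathbb F^*\to K$ and rational semisimple elements $h\in\mathfrak k$. The bridge is the following dictionary. Given an algebraic homomorphism $\mu:\mathbb F^*\to K$, its image lies in a maximal torus $T\subset K$, so $V$ decomposes into $\mu$-weight spaces $V=\bigoplus_{n\in\mathbb Z}V_n$, where $\mu(t)v=t^nv$ for $v\in V_n$. Differentiating $\mu$ yields an element $h:=d\mu\in\mathrm{Lie}(T)\subset\mathfrak k$ which is semisimple and acts on $V_n$ by the scalar $n$; in particular $h$ is rational (its eigenvalues on $V$ are integers) and the sum of its positive eigenspaces is exactly $V^{>0}_h=\bigoplus_{n>0}V_n$. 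Conversely, if $h\in\mathfrak k$ is rational semisimple, then $h$ lies in $X_*(T)\otimes_{\mathbb Z}\mathbb Q$ for a maximal torus $T$, so some positive integer multiple $mh$ is the differential of an algebraic one-parameter subgroup $\mu:\mathbb F^*\to T$; since rescaling $h$ by the positive integer $m$ leaves the positive-eigenspace decomposition unchanged, $V^{>0}_h=\bigoplus_{n>0}V_n$ for this $\mu$.

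Granting this dictionary, the key computation is that for $x=\sum_n x_n$ with $x_n\in V_n$ one has $\mu(t)x=\sum_n t^nx_n$, whence $\lim\limits_{t\to 0}\mu(t)x=0$ if and only if $x_n=0$ for all $n\le 0$, i.e. if and only if $x\in\bigoplus_{n>0}V_n=V^{>0}_h$.

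For the implication $(\Leftarrow)$, suppose $h\in\mathfrak k$ is nonzero rational semisimple with $x\in V^{>0}_h$. Choosing $\mu$ as in the dictionary, the computation gives $\lim\limits_{t\to 0}\mu(t)x=0$; since $\mu(t)x\in Kx$ for $t\ne 0$, this shows $0\in\overline{Kx}$. For the implication $(\Rightarrow)$, suppose $0\in\overline{Kx}$. By the Hilbert--Mumford criterion above, $\overline{Kx}$ contains a unique closed orbit; as $\{0\}$ is a closed orbit contained in $\overline{Kx}$, this unique closed orbit must be $\{0\}$, and the criterion supplies a one-parameter subgroup $\mu$ with $\lim\limits_{t\to 0}\mu(t)x=0$. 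Put $h:=d\mu$, a rational semisimple element, and apply the computation to conclude $x\in V^{>0}_h$. It remains to check $h\ne 0$: if $x\ne 0$ then $\mu$ cannot be trivial, since otherwise $\lim\limits_{t\to0}\mu(t)x=x\ne0$, so $h\ne 0$; if $x=0$ the conclusion $x\in V^{>0}_h$ holds for any nonzero rational semisimple $h$, which exists because $K$ has positive rank.

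The main obstacle I expect is making the dictionary between one-parameter subgroups and rational semisimple elements fully rigorous: one must verify that \emph{rational semisimple} is precisely the integrality condition guaranteeing that $mh$ integrates to an algebraic one-parameter subgroup for some positive integer $m$, that every semisimple element of $\mathfrak k$ lies in the Lie algebra of a maximal torus, and that the weight-space signs match the eigenvalue signs so that the positive cone $V^{>0}_h$ is genuinely insensitive to the positive rescaling $h\mapsto mh$. Once this correspondence is in place, both directions collapse to the single elementary limit computation above.
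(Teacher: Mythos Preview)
The paper does not supply a proof of this theorem: it is stated in the Preliminaries (Section~2.3) as a reformulation of the Hilbert--Mumford criterion, with the surrounding material attributed to~\cite{VP}, and is used only through its Corollary~\ref{HM}. So there is no ``paper's proof'' to compare against.

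Your argument is the standard derivation and is correct. The dictionary you set up---differentiating a one-parameter subgroup $\mu$ to obtain a rational semisimple $h=d\mu$, and conversely clearing denominators so that $mh$ integrates to an algebraic cocharacter---is exactly how one passes between the group-theoretic and infinitesimal forms of the criterion, and the limit computation $\lim_{t\to 0}\sum_n t^n x_n=0\iff x_n=0$ for $n\le 0$ is the right translation. Your handling of the $(\Rightarrow)$ direction, invoking the uniqueness of the closed orbit to force $\bar x=0$ and then checking $h\ne 0$, is clean.

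One tiny caveat on the edge case $x=0$: your appeal to ``$K$ has positive rank'' is not guaranteed by the bare hypotheses of the subsection (where $K$ is only assumed reductive), so strictly speaking the statement fails when $\mathfrak k=0$ and $x=0$. In the paper's actual setting $\mathfrak k$ is a nontrivial reductive subalgebra of a semisimple $\mathfrak g$, so this is harmless, but you might flag the implicit assumption.
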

\begin{corollary}[\cite{VP}]\upshape \label{HM}There exists a finite set $H$ of rational semisimple elements of $\mathfrak k$ such that $\mathrm N_KV:=\cup_{h\in H}KV_h^{>0}$, where $KV_h^{>0}:=\{v\in V\mid v=kv_h$ for some $k\in K$ and $v_h\in V_h^{>0}$\}.\end{corollary}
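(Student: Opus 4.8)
The plan is to start from the refined Hilbert--Mumford theorem stated just above and turn the a priori infinite family of one-parameter degenerations into a finite one. That theorem already yields $\mathrm N_KV=\bigcup_h V_h^{>0}$, where $h$ ranges over all rational semisimple elements of $\mathfrak k$: indeed $x\in\mathrm N_KV$ exactly when $0\in\overline{Kx}$, which by the theorem means $x\in V_h^{>0}$ for some such $h$. Since each $V_h^{>0}$ is contained in the $K$-stable set $\mathrm N_KV$, we get $KV_h^{>0}\subset\mathrm N_KV$ as well, so the same equality holds with $V_h^{>0}$ replaced by $KV_h^{>0}$. The whole content of the corollary is therefore the reduction of this union to a union over a finite set $H$, and I would obtain this reduction in two steps.

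First I would reduce to a fixed Cartan subalgebra. Fix a maximal torus $T\subset K$ with Lie algebra $\mathfrak t$. Every semisimple element of $\mathfrak k$ is $K$-conjugate to an element of $\mathfrak t$, and for $g\in K$ one checks from $d\rho(ghg^{-1})=\rho(g)\,d\rho(h)\,\rho(g)^{-1}$ that $V_{ghg^{-1}}^{>0}=gV_h^{>0}$, whence $KV_{ghg^{-1}}^{>0}=KV_h^{>0}$. Thus it suffices to let $h$ range over the rational semisimple elements lying in $\mathfrak t$.

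Second I would extract finiteness from the weight decomposition. Decompose $V=\bigoplus_{\chi\in\Phi}V_\chi$ into weight spaces for $T$, where $\Phi\subset\mathfrak t^*$ is the finite set of weights of $V$. For $h\in\mathfrak t$ one has $V_h^{>0}=\bigoplus_{\chi\in\Phi,\ \chi(h)>0}V_\chi$, so the subspace $V_h^{>0}$ depends only on the sign pattern $(\mathrm{sign}\,\chi(h))_{\chi\in\Phi}$. The finitely many rational hyperplanes $\{\chi=0\}$, $\chi\in\Phi$, cut $\mathfrak t$ into finitely many relatively open cones on each of which every $\chi$ has constant sign; hence only finitely many distinct subspaces arise as $h$ varies. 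Choosing one rational semisimple representative $h$ in each of these cones produces a finite set $H$ with $\bigcup_{h\in H}KV_h^{>0}=\bigcup_hKV_h^{>0}=\mathrm N_KV$.

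The only point requiring care is rationality: the representatives $h$ must be genuine rational semisimple elements, i.e. must lie in the $\mathbb Q$-span of the cocharacter lattice $X_*(T)$. This is automatic because the hyperplanes $\{\chi=0\}$ are defined over $\mathbb Q$ (the weights $\chi$ are integral on $X_*(T)$), so each nonempty cone in the decomposition of $\mathfrak t$ contains points of $X_*(T)\otimes\mathbb Q$; after clearing denominators such a point is the derivative of an honest one-parameter subgroup $\mu:\mathbb F^*\to T$ realizing the same $V_h^{>0}$. I expect this bookkeeping to be the only mild obstacle, the combinatorial heart of the argument being simply that finitely many weights admit finitely many sign patterns.
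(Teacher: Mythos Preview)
Your argument is correct and is essentially the standard proof: reduce to a fixed Cartan subalgebra by conjugacy, then observe that $V_h^{>0}$ depends only on the sign pattern of the finitely many weights of $V$ on $h$, so only finitely many distinct subspaces occur. The rationality bookkeeping you flag is indeed routine, since the weight hyperplanes are rational.

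There is nothing to compare against here: the paper does not supply its own proof of this corollary. It is stated with a citation to Vinberg--Popov~\cite{VP} and used as a black box in the proof of Theorem~\ref{G}. Your write-up would serve perfectly well as the omitted argument.
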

\subsection{Gabber's theorem.}
Let $G$ be the adjoint group of $\mathfrak g$ and $M$ be a finitely generated $\mathfrak g$-module.
\begin{definition}\upshape Suppose $\EuScript X$ is a smooth $G$-variety with a closed $G$-invariant nondegenerate 2-form $\omega$. Such a pair $(\EuScript X,\omega)$ is called a {\it symplectic $G$-variety}. \end{definition}
\begin{definition}\upshape Let $(\EuScript X,\omega)$ be a symplectic $G$-variety. We call a subvariety $Y\subset \EuScript X$\\a){\it isotropic} if $\omega|_{\mathrm T_yY}=0$ for a generic point $y\in Y$;\\b){\it coisotropic} if $\omega|_{(\mathrm T_yY)^\bot}=0$ for a generic point $y\in Y$;\\c){\it Lagrangian} if $\mathrm T_yY=(\mathrm T_yY)^\bot$ for a generic point $y\in Y$ or equivalently if it is both isotropic and coisotropic.\end{definition}
\begin{theorem}[O. Gabber ~\cite{Gab}]\label{Gab}\upshape The variety $\mathrm V(M)\subset\mathrm N_G\mathfrak g^*$ is a coisotropic subvariety of $\mathfrak g^*$ with respect to the Kirillov symplectic structure. The variety $\EuScript V(M)$ is a coisotropic subvariety of $\mathrm T^*X$ with respect to the natural symplectic structure.\end{theorem}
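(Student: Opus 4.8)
The plan is to treat the two assertions separately, in each case realizing the support variety as the characteristic variety of a finitely generated module over a filtered ring with commutative Noetherian associated graded, and then appealing to the involutivity theorem for characteristic varieties.

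For $\mathrm V(M)$: the PBW filtration on $\mathrm U(\mathfrak g)$ satisfies $[\mathrm U_{\le p},\mathrm U_{\le q}]\subseteq\mathrm U_{\le p+q-1}$, so it equips $\mathrm{gr}\,\mathrm U(\mathfrak g)=\mathrm S(\mathfrak g)=\mathbb F[\mathfrak g^*]$ with the Kirillov--Kostant--Souriau Poisson bracket, whose symplectic leaves are the coadjoint $G$-orbits with their Kirillov forms; hence a conic subvariety of $\mathfrak g^*$ is coisotropic for the Kirillov structure exactly when the reduced ideal it defines is closed under that bracket, equivalently when its intersection with each coadjoint orbit is coisotropic in that orbit. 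Fixing a good filtration of $M$ (possible since $M$ is finitely generated), $\mathrm{gr}\,M$ is a finitely generated $\mathrm S(\mathfrak g)$-module with $\mathrm{Supp}\,\mathrm{gr}\,M=\mathrm V(M)$, so $\mathrm V(M)$ is defined by $\sqrt{\mathrm{Ann}_{\mathrm S(\mathfrak g)}(\mathrm{gr}\,M)}$; moreover $\mathrm V(M)\subseteq\mathrm N_G\mathfrak g^*$ because $M$ affords a central character and, by Kostant's theorem, $\mathrm{gr}\,\mathrm Z(\mathfrak g)$ is the invariant subalgebra $\mathrm S(\mathfrak g)^{G}$, whose augmentation ideal cuts out the nilpotent cone. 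Thus the first assertion reduces to the claim that $\sqrt{\mathrm{Ann}(\mathrm{gr}\,M)}$ is a Poisson ideal.

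For $\EuScript V(M)$: the same reduction applies with $\mathrm U(\mathfrak g)$ replaced by the sheaf $\EuScript D^\lambda(X)$, whose order filtration satisfies $[\EuScript D^\lambda_{\le p},\EuScript D^\lambda_{\le q}]\subseteq\EuScript D^\lambda_{\le p+q-1}$, whose associated graded is $\EuScript O_{\mathrm T^*X}$ independently of the twist $\lambda$, and whose induced bracket is the canonical symplectic one on $\mathrm T^*X$. Since $\mathrm{Ind}\,M$ is coherent over $\EuScript D^\lambda(X)$, a good filtration gives $\EuScript V(M)=\mathrm{Supp}\,\mathrm{gr}(\mathrm{Ind}\,M)$, and coisotropy of $\EuScript V(M)$ is precisely the statement that $\sqrt{\mathrm{Ann}(\mathrm{gr}(\mathrm{Ind}\,M))}$ is closed under that bracket; as coisotropy is local on $X$, one may replace $\EuScript D^\lambda(X)$ by its sections over an affine open, an ordinary filtered ring. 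So both assertions are instances of the involutivity theorem: for a filtered ring $A=\bigcup A_p$ with $[A_p,A_q]\subseteq A_{p+q-1}$ and $\mathrm{gr}\,A$ commutative Noetherian, and $N$ finitely generated with a good filtration, $\sqrt{\mathrm{Ann}_{\mathrm{gr}\,A}(\mathrm{gr}\,N)}$ is a Poisson ideal.

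This last statement is Gabber's theorem, for which I would cite \cite{Gab} rather than reprove it; but the mechanism behind it is worth recording. A lift $\tilde f\in A_d$ of a symbol $f=\sigma_d(\tilde f)$ lying in $\mathrm{Ann}(\mathrm{gr}\,N)$ lowers any good filtration by one extra unit, $\tilde f N_p\subseteq N_{p+d-1}$; hence for two such symbols $[\tilde f,\tilde g]N_p\subseteq N_{p+d+e-2}$ while $[\tilde f,\tilde g]$ still lies in $A_{d+e-1}$, so the principal symbol of $[\tilde f,\tilde g]$ --- which by definition represents $\{f,g\}$ --- annihilates $\mathrm{gr}\,N$. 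The step I expect to be the real obstacle, and the reason the theorem is stated with the radical, is the upgrade of this from the annihilator to its radical: substituting powers of a symbol that only lies in the radical is not enough, and bounding how far such powers lower a good filtration uses the finite generation of $N$ essentially; one must also globalize over $X$. This is exactly what \cite{Gab} supplies, and the rest of the argument is bookkeeping.
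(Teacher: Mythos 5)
The paper does not prove this statement at all---it is imported with attribution from \cite{Gab}---and your proposal likewise defers the essential content (involutivity of $\sqrt{\mathrm{Ann}(\mathrm{gr}\,N)}$ for a good filtration) to that same reference, so the two treatments coincide where it matters. Your reduction of both assertions to the abstract involutivity theorem, via the Poisson brackets induced on $\mathrm S(\mathfrak g)$ and on $\EuScript O_{\mathrm T^*X}$ by the PBW and order filtrations, is correct, as is your diagnosis that the passage from the annihilator (where the bracket computation is elementary) to its radical is the genuinely hard step that \cite{Gab} supplies.
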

\begin{definition}\upshape A finitely generated $(\mathfrak g, \mathfrak k)$-module $M$ which affords a central character is called {\it holonomic} if $\tilde V$ is a Lagrangian subvariety of $G\tilde V$ for any irreducible component $\tilde V$ of $\mathrm V(M)$.\end{definition}
\section{Proofs}
Let $\mathfrak g$ be a semisimple Lie algebra and $\mathfrak k\subset\mathfrak g$ be a reductive in $\mathfrak g$ subalgebra. Let $G$ be the adjoint group of $\mathfrak g$ and $K\subset G$ be a connected reductive subgroup such that lie~$K=\mathfrak k$.
\setcounter{theorem}{1}\begin{theorem}\upshape Let $\EuScript O\subset\mathfrak g^*$ be a nilpotent coadjoint orbit, $\mathfrak k^\bot$ be the annihilator of $\mathfrak k$ in $\mathfrak g^*$, and $\mathrm N_{\mathfrak k}\mathfrak g^*$ be the $\mathfrak k$-null-cone in $\mathfrak g^*$. Then the irreducible components of $\EuScript O\cap \mathfrak k^\bot\cap\mathrm N_{\mathfrak k}\mathfrak g^*$ are isotropic subvarieties of $\EuScript O$.\end{theorem}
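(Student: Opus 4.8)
The plan is to translate the statement to the Lie algebra $\mathfrak g$ and then prove it by a tangent‑space computation along a Hilbert--Mumford stratification. Fix the Killing form $\kappa$ and use it to identify $\mathfrak g$ with $\mathfrak g^*$ $K$‑equivariantly. Then $\mathfrak k^\bot$ becomes $\mathfrak m:=\mathfrak k^{\bot_\kappa}$, an $\mathrm{ad}\,\mathfrak k$‑stable complement to $\mathfrak k$ (one uses that $\kappa|_{\mathfrak k}$ is nondegenerate, as $\mathfrak k$ is reductive in $\mathfrak g$), $\mathrm N_{\mathfrak k}\mathfrak g^*$ becomes the null‑cone $\mathrm N_K\mathfrak g$, and $\EuScript O$ becomes a nilpotent adjoint orbit carrying the Kostant--Kirillov form $\omega$, with $\omega_x([a,x],[b,x])=\kappa(x,[a,b])$. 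So it is enough to show that every irreducible component of $\EuScript O\cap\mathfrak m\cap\mathrm N_K\mathfrak g$ is isotropic in $(\EuScript O,\omega)$. Applying Corollary~\ref{HM} to the $K$‑module $\mathfrak g$ gives finitely many rational semisimple $h\in\mathfrak k$ with $\mathrm N_K\mathfrak g=\bigcup_h K\cdot\mathfrak u_h$, where $\mathfrak u_h=\mathfrak g_h^{>0}$ is the nilradical of the parabolic $\mathfrak p_h=\mathfrak g_h^{\ge 0}$. Since $\EuScript O$ and $\mathfrak m$ are $K$‑stable, $\EuScript O\cap\mathfrak m\cap\mathrm N_K\mathfrak g=\bigcup_h K\cdot(\EuScript O\cap\mathfrak m\cap\mathfrak u_h)$, so each component of the left side has the form $Z:=\overline{K\cdot W_0}$ for some irreducible component $W_0$ of some $\EuScript O\cap\mathfrak m\cap\mathfrak u_h$. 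Fix such $h$ and $W_0$, and choose a Borel subalgebra $\mathfrak b$ with $\mathfrak u_h\subseteq\mathfrak b\subseteq\mathfrak p_h$; write $\mathfrak n=[\mathfrak b,\mathfrak b]$.

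The heart of the argument is the estimate
\[
T_wZ\ \subseteq\ [\mathfrak k,w]\ +\ \bigl([\mathfrak b,w]\cap\mathfrak m\bigr)\qquad (w\in W_0\text{ generic}).
\]
To obtain it, recall that it is classical (Steinberg, Spaltenstein) that $\EuScript O\cap\mathfrak n$ is a finite union of $B$‑orbits, its components being the orbital varieties of $\EuScript O$; since $B\subseteq P_h$ preserves $\mathfrak u_h$, the subset $\EuScript O\cap\mathfrak u_h$ is likewise a finite union of $B$‑orbits, and hence $\EuScript O\cap\mathfrak m\cap\mathfrak u_h$ is a finite union of sets of the form $By\cap\mathfrak m$. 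Thus $W_0$ lies in the closure of a single such $By_0\cap\mathfrak m$ and meets it in a dense open subset; for $w$ in that subset, $T_wW_0\subseteq T_w\overline{By_0\cap\mathfrak m}=T_w(By_0\cap\mathfrak m)\subseteq T_w(By_0)\cap\mathfrak m=[\mathfrak b,w]\cap\mathfrak m$, using that $By_0$ is a smooth orbit and that the tangent space of an intersection is contained in the intersection of tangent spaces. Combining this with generic smoothness (characteristic $0$) of the multiplication morphism $K\times W_0\rightarrow Z$, which gives $T_wZ=[\mathfrak k,w]+T_wW_0$ for generic $w$, yields the displayed inclusion. Note that the cruder bound $T_wZ\subseteq[\mathfrak k,w]+[\mathfrak b,w]$ would not suffice, since that space is in general not isotropic; the refinement by $\cap\,\mathfrak m$, which reflects $W_0\subseteq\mathfrak m$, is exactly what the computation below needs.

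It then remains to check that $\omega_w$ annihilates $[\mathfrak k,w]+([\mathfrak b,w]\cap\mathfrak m)$ for such $w$ — a short bracket computation built on the two facts $w\in\mathfrak m=\mathfrak k^{\bot_\kappa}$ and $w\in\mathfrak u_h\subseteq\mathfrak n$. Indeed, for $k,k'\in\mathfrak k$ we get $\omega_w([k,w],[k',w])=\kappa(w,[k,k'])=0$ because $[k,k']\in\mathfrak k$ while $w\in\mathfrak m$; for $k\in\mathfrak k$ and $v=[b,w]\in[\mathfrak b,w]\cap\mathfrak m$ we get $\omega_w([k,w],v)=\kappa(w,[k,b])=\kappa(k,[b,w])=\kappa(k,v)=0$ because $v\in\mathfrak m$; and for $b,b'\in\mathfrak b$ we get $\omega_w([b,w],[b',w])=\kappa(w,[b,b'])=0$ because $[b,b']\in\mathfrak n$ and $\kappa(w,\mathfrak n)=0$ as $w\in\mathfrak n$. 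Hence $\omega_w|_{T_wZ}=0$ for generic $w\in W_0$; since $\omega$ is $K$‑invariant and $Z=\overline{K\cdot W_0}$, the locus on which $\omega$ kills the tangent space is $K$‑stable and contains a dense subset of $Z$, so $Z$ is isotropic in $\EuScript O$, as required.

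The only genuinely delicate point is the tangent‑space estimate of the second paragraph: one must see $W_0$, generically, as the intersection of a single $B$‑orbit with $\mathfrak m$ — for which the finiteness of $B$‑orbits on $\EuScript O\cap\mathfrak u_h$ is essential — and then carry the condition $\subseteq\mathfrak m$ through the estimate. The Killing‑form translation, the Hilbert--Mumford reduction, and the final bracket identities are routine.
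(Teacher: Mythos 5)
The overall architecture of your argument is sound --- the Killing-form identification, the Hilbert--Mumford reduction to sets of the form $K\cdot(\EuScript O\cap\mathfrak m\cap\mathfrak u_h)$, the use of generic smoothness of $K\times W_0\to Z$, and the closing bracket identities showing that $[\mathfrak k,w]+([\mathfrak b,w]\cap\mathfrak m)$ is $\omega_w$-isotropic are all correct. But the step you yourself identify as the heart of the proof rests on a false premise: it is \emph{not} a theorem of Steinberg or Spaltenstein that $\EuScript O\cap\mathfrak n$ is a finite union of $B$-orbits. Their result says only that every irreducible component of $\EuScript O\cap\mathfrak n$ has dimension $\tfrac12\dim\EuScript O$ (equivalently, is Lagrangian in $\EuScript O$). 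Finiteness of $B$-orbits fails in general: the adjoint action of $B$ on $\mathfrak n$ has infinitely many orbits already for $\mathfrak{sl}_n$ with $n$ large (B\"urgstein--Hesselink, Kashin), and since there are only finitely many nilpotent orbits, some $\EuScript O\cap\mathfrak n$ must then contain infinitely many $B$-orbits; the analogous finiteness for $P_h$ acting on $\mathfrak u_h$ fails as well. Consequently you cannot place a generic point of $W_0$ inside a single orbit $By_0$, and the key estimate $T_wW_0\subseteq[\mathfrak b,w]\cap\mathfrak m$ is unjustified. The fallback bound $T_wW_0\subseteq T_w\EuScript O\cap\mathfrak u_h\cap\mathfrak m=[\mathfrak g,w]\cap\mathfrak u_h\cap\mathfrak m$ does not rescue the computation, because $\omega_w([g,w],[g',w])=\kappa(w,[g,g'])$ need not vanish for arbitrary $g,g'\in\mathfrak g$ with $[g,w],[g',w]\in\mathfrak u_h\cap\mathfrak m$. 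So there is a genuine gap exactly at the step you call delicate.

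The paper closes this gap without any orbit-finiteness input. It realizes $K(\mathfrak u_h\cap\mathfrak k^\bot)$ as the image under the moment map $\phi:\mathrm T^*(G/P_h)\to\mathfrak g^*$ of the conormal bundle $\mathrm N^*_{S_K/S_G}$ to the closed $K$-orbit $S_K=K/(K\cap P_h)$ inside $S_G=G/P_h$. A conormal bundle is Lagrangian in $\mathrm T^*S_G$, and the moment map carries isotropic subvarieties to subvarieties whose intersection with each coadjoint orbit is isotropic for the Kirillov form; intersecting with $\EuScript O$ and invoking Corollary~\ref{HM} then finishes the proof. If you wish to keep your tangent-space formulation, the repair is to replace ``a generic $w\in W_0$ lies in a single $B$-orbit'' by this conormal-bundle description: at a point $\xi\in\mathrm N^*_{S_K/S_G}$ over $w=\phi(\xi)$ the image of $d\phi_\xi$ already lands in an $\omega_w$-isotropic subspace, which is the containment your bracket computation was designed to exploit.
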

\begin{proof} As $\mathfrak g$ is semisimple, we can freely identify $\mathfrak g$ with $\mathfrak g^*$. Let $h$ be a nonzero rational semisimple element of $\mathfrak k$. By definition $\mathfrak g^{\ge 0}_h$ is the direct sum of all $h$-eigenspaces in $\mathfrak g$ with nonnegative eigenvalues. Let $G_h^{\ge 0}\subset G$ be the parabolic subgroup with Lie algebra $\mathfrak g_h^{\ge 0}$, and let $S_G:=G/G_h^{\ge 0}$ be a quotient. In the same way we define $\mathfrak k_h^{\ge 0}, K_h^{\ge 0}, S_K$. Let $\mathfrak n_h$ be the nilpotent radical of $\mathfrak g_h^{\ge 0}$. Obviously $Ke\subset S_G$ is isomorphic to $S_K$. By definition,\\$\bullet~G\mathfrak n_h:=\{x\in\mathfrak g\mid x=gn$ for some $n\in\mathfrak n_h, g\in G$\},\\$\bullet~K\mathfrak n_h:=\{x\in\mathfrak g\mid x=kn$ for some $n\in\mathfrak n_h, k\in K$\},\\$\bullet~K(\mathfrak n_h\cap\mathfrak k^\bot):=\{x\in\mathfrak g\mid x=kn$ for some $n\in\mathfrak n_h\cap\mathfrak k^\bot, k\in K$\}.\\

Let $\phi:$~T$^*S_G\to\mathfrak g^*$ be the moment map. It is a straightforward observation that $G\mathfrak n_h$ coincides with $\phi($T$^*S_G)$, $K\mathfrak n_h$ coincides with $\phi($T$^*S_G|_{S_K})$, $K(\mathfrak n_h\cap\mathfrak k^\bot)$ coincides with $\phi($N$_{S_K/S_G}^*)$.
$$\xymatrix{\mathrm T^*S_G\ar[d]^\phi&\mathrm T^*S_G|_{S_K}\ar[d]^\phi\ar[l]^{\text{inclusion}}&\mathrm N^*_{S_K/S_G}\ar[d]^\phi\ar[l]^{\text{inclusion}}\\G\mathfrak n_h&K\mathfrak n_h\ar[l]^{\text{inclusion}}&K(\mathfrak n_h\cap\mathfrak k^\bot)\ar[l]^{\text{inclusion}}}$$
As N$^*_{S_K/S_G}$ is an isotropic subvariety of T$^*S_G$, the variety $\phi($N$^*_{S_K/S_G}$) is isotropic in $G(\mathfrak n_h\cap\mathfrak k^\bot)$ and any subvariety $\tilde V$ of $K(\mathfrak n_h\cap\mathfrak k^\bot)$ is isotropic in $G\tilde V$. Therefore by Corollary~\ref{HM} any subvariety $\tilde V$ of $\mathfrak k^\bot\cap\mathrm N_{\mathfrak k}\mathfrak g$ is isotropic in $G\tilde V$.\end{proof}

\begin{definition}\label{kus}\upshape~\\$\bullet$~~~~Let $\mathrm{V}^._{\mathfrak g, \mathfrak k}$ be the set of all irreducible components of all possible intersections of $\mathrm N_K\mathfrak k^\bot$ with a $G$-orbit in $\mathrm N_G\mathfrak g^*$.\\$\bullet$ Let $\EuScript V^._{\mathfrak g, \mathfrak k}$ be the set of all irreducible components of the preimages of $\mathrm V^._{\mathfrak g, \mathfrak k}$ under the moment map $\mathrm T^*X\to\mathfrak g^*$.\\$\bullet$ Let $\mathrm L^._{\mathfrak g, \mathfrak k}$ be the set of all images of elements of $\EuScript V^._{\mathfrak g, \mathfrak k}$ in $X$.\end{definition}
\setcounter{theorem}{2}\begin{theorem}\upshape Let $M$ be a finitely generated $(\mathfrak g, \mathfrak k)$-module of finite type which affords a central character. The irreducible components of $\mathrm V(M)$ are elements of $\mathrm V^._{\mathfrak g, \mathfrak k}$, the irreducible components of $\EuScript V(M)$ are elements of $\EuScript V^._{\mathfrak g, \mathfrak k}$, the irreducible components of $\mathrm L(M)$ are elements of $\mathrm L^._{\mathfrak g, \mathfrak k}$.\end{theorem}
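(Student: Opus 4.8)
The plan is to prove the three assertions in turn — the $\mathrm V(M)$ one first, then the $\EuScript V(M)$ one, then the $\mathrm L(M)$ one — reducing each to the previous via the moment map $\phi_X$ and the bundle projection $\mathrm{pr}$, the common engine being a dimension count that squeezes the coisotropy of Theorem~\ref{Gab} against the isotropy of Theorem~\ref{G}. First I would locate $\mathrm V(M)$. Fernando's lemma gives $\mathrm V(M)\subseteq\mathfrak k^\bot$. Since $M$ is of finite $\mathfrak k$-type, the lemma characterising finite type says the only closed $\mathfrak k$-orbit inside $\mathrm V(M)$ is $\{0\}$; as $\mathrm V(M)$ is closed and $K$-stable, for any $x\in\mathrm V(M)$ the orbit closure $\overline{Kx}$ lies in $\mathrm V(M)$ and, by Hilbert--Mumford, contains a unique closed orbit, which must therefore be $\{0\}$ — so $0\in\overline{Kx}$ and $\mathrm V(M)\subseteq\mathrm N_{\mathfrak k}\mathfrak g^*$. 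Hence $\mathrm V(M)\subseteq\mathfrak k^\bot\cap\mathrm N_{\mathfrak k}\mathfrak g^*=\mathrm N_K\mathfrak k^\bot$. Because $M$ affords a central character, also $\mathrm V(M)\subseteq\mathrm N_G\mathfrak g^*$ (as recorded in Theorem~\ref{Gab}). Since $\mathrm N_G\mathfrak g^*$ is the finite disjoint union of the nilpotent coadjoint orbits $\EuScript O$ and $\mathrm N_K\mathfrak k^\bot$ is closed, $\mathrm N_K\mathfrak k^\bot\cap\mathrm N_G\mathfrak g^*=\bigcup_{W\in\mathrm V^{.}_{\mathfrak g,\mathfrak k}}\overline W$ (identifying an element of $\mathrm V^{.}_{\mathfrak g,\mathfrak k}$ with its Zariski closure when it is compared with the closed set $\mathrm V(M)$), and $\mathrm V(M)$ lies inside this finite union.

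For the $\mathrm V(M)$-statement, let $Z$ be an irreducible component of $\mathrm V(M)$. Since $Z$ is irreducible and meets only finitely many nilpotent orbits, there is a unique orbit $\EuScript O$ with $Z\cap\EuScript O$ dense in $Z$; then $Z\cap\EuScript O$ is irreducible, lies in $\mathrm N_K\mathfrak k^\bot\cap\EuScript O$, and hence is contained in one of its irreducible components $W$, i.e.\ in some $W\in\mathrm V^{.}_{\mathfrak g,\mathfrak k}$. Theorem~\ref{G} gives $\dim W\le\tfrac12\dim\EuScript O$, while the coisotropy of $\mathrm V(M)$ in Theorem~\ref{Gab}, together with the density of $Z\cap\EuScript O$ in $Z$, gives $\dim Z=\dim(Z\cap\EuScript O)\ge\tfrac12\dim\EuScript O$. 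Hence $\dim(Z\cap\EuScript O)=\dim W$; since $Z\cap\EuScript O=Z\cap W$ is closed in the irreducible variety $W$ and has its dimension, $Z\cap\EuScript O=W$ and $Z=\overline W$.

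For the $\EuScript V(M)$- and $\mathrm L(M)$-statements, let $\tilde Z$ be an irreducible component of $\EuScript V(M)$. By the Barlet--Kashiwara identity $\phi_X(\EuScript V(M))=\mathrm V(M)$, the closure of $\phi_X(\tilde Z)$ is an irreducible subvariety of $\mathrm V(M)$, hence contained in $\overline W$ for some $W\in\mathrm V^{.}_{\mathfrak g,\mathfrak k}$, so $\tilde Z\subseteq\phi_X^{-1}(\overline W)$. Here I would use that the moment map of the flag variety $X$ is the Springer map, whose fibre over a point of a nilpotent orbit $\EuScript O'$ is equidimensional of dimension $\dim X-\tfrac12\dim\EuScript O'$; since $\overline W\subseteq\mathrm N_K\mathfrak k^\bot$, Theorem~\ref{G} bounds $\dim(\overline W\cap\EuScript O')\le\tfrac12\dim\EuScript O'$ for every $\EuScript O'$, so $\dim\phi_X^{-1}(\overline W)\le\dim X$. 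On the other hand $\EuScript V(M)$ is coisotropic in $\mathrm T^*X$ by Theorem~\ref{Gab}, so $\dim\tilde Z\ge\dim X$. Therefore $\dim\tilde Z=\dim X$, and $\tilde Z$, being a top-dimensional irreducible closed subvariety of $\phi_X^{-1}(\overline W)$, is one of its irreducible components, i.e.\ $\tilde Z\in\EuScript V^{.}_{\mathfrak g,\mathfrak k}$. Finally $\mathrm L(M)=\mathrm{pr}(\EuScript V(M))$ is the union of the closures of the $\mathrm{pr}(\tilde Z)$ over the components $\tilde Z$ of $\EuScript V(M)$, each of which is by definition an element of $\mathrm L^{.}_{\mathfrak g,\mathfrak k}$; so the components of $\mathrm L(M)$ belong to $\mathrm L^{.}_{\mathfrak g,\mathfrak k}$.

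The hardest part, I expect, will be the two dimension squeezes — especially the one on $\mathrm T^*X$: one has to check that the Springer fibre dimension $\dim X-\tfrac12\dim\EuScript O'$ combines with the isotropy bound \emph{uniformly over all nilpotent orbits}, so that $\phi_X^{-1}(\overline W)$ picks up no component of dimension exceeding $\dim X$ over the boundary orbits of $\overline W$. The remaining ingredients — Fernando's lemma, the finite-type lemma, Hilbert--Mumford, Barlet--Kashiwara, and Theorems~\ref{Gab} and~\ref{G} — are then combined essentially mechanically once $\mathrm V(M)$ has been placed inside $\mathrm N_K\mathfrak k^\bot\cap\mathrm N_G\mathfrak g^*$.
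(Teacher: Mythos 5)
Your argument is correct, and its engine is the same as the paper's: squeeze the coisotropy of $\mathrm V(M)$ (Theorem~\ref{Gab}) against the isotropy of $\mathrm N_K\mathfrak k^\bot\cap\EuScript O$ (Theorem~\ref{G}). The paper phrases the squeeze qualitatively --- an irreducible component $\tilde V$ is coisotropic, sits inside the isotropic set $\mathrm N_K\mathfrak k^\bot\cap\EuScript O$, hence is Lagrangian and therefore a full component of that intersection --- whereas you make the same point as an explicit dimension count $\tfrac12\dim\EuScript O\le\dim Z\le\dim W\le\tfrac12\dim\EuScript O$; these are identical in substance. Where you genuinely go beyond the paper is the second and third assertions: the paper's proof is a single sentence treating only $\mathrm V(M)$ and leaves the transfer to $\EuScript V(M)$ and $\mathrm L(M)$ implicit, while you supply it via the equidimensionality of Springer fibres ($\dim\phi_X^{-1}(e)=\dim X-\tfrac12\dim\EuScript O'$ for $e\in\EuScript O'$), which bounds $\dim\phi_X^{-1}(\overline W)$ by $\dim X$ uniformly over all boundary orbits and lets the coisotropy of $\EuScript V(M)$ in $\mathrm T^*X$ force $\dim\tilde Z=\dim X$. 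This is a legitimate and welcome completion. One small mismatch to tidy: $\EuScript V^{.}_{\mathfrak g,\mathfrak k}$ is defined via components of $\phi_X^{-1}(W)$ for $W\in\mathrm V^{.}_{\mathfrak g,\mathfrak k}$, not of $\phi_X^{-1}(\overline W)$, and a top-dimensional component of the latter could lie entirely over a boundary orbit of $\overline W$; this is repaired by running your own argument with the orbit $\EuScript O''$ over which $\tilde Z$ is generically supported (so that $\tilde Z$ becomes a component of $\phi_X^{-1}(W'')$ for the relevant $W''\subset\mathrm N_K\mathfrak k^\bot\cap\EuScript O''$), at the cost of one more line and no new ideas.
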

\begin{proof}Let $\tilde V$ be an irreducible component of V$(M)$ and $\EuScript O$ be the closure of $G\tilde V$. By Theorem~\ref{Gab} the variety $\tilde V$ is coisotropic. As $\tilde V\subset$N$_K\mathfrak k^\bot\cap \EuScript O$, $\tilde V$ is isotropic, and therefore $\tilde V$ is Lagrangian and is an irreducible component of $\EuScript O\cap$N$_K\mathfrak g^*\cap\mathfrak k^\bot$.\end{proof}
\begin{proof}[Proof of Theorem 1]As any irreducible component $\tilde{\EuScript V}$ of $\EuScript V(M)$ is Lagrangian in T$^*X$, the module Ind$M$ is holonomic.\end{proof}
\section{Acknowledgements}I thank my scientific advisor Ivan Penkov for his attention to my work and the great help with the text-editing.

\end{document}